\let\my@abstract=\relax
\def\abstract#1{%
  \def\my@abstract{%
    \normalfont\Small
    \list{}{\labelwidth\z@
      \leftmargin3pc \rightmargin\leftmargin
      \listparindent\normalparindent \itemindent\z@
      \parsep\z@ \@plus\p@
      \let\fullwidthdisplay\relax
    }%
    \item[\hskip\labelsep\scshape\abstractname.]%
    #1
  \endlist}}
\def\@setabstracta{%
  \ifx\my@abstract\relax
  \else
    \skip@20\p@ \advance\skip@-\lastskip
    \advance\skip@-\baselineskip \vskip\skip@
  \my@abstract
    \prevdepth\z@ 
  \fi
}
\newtheorem{theorem}{Theorem}[section]
\newtheorem{lemma}[theorem]{Lemma}
\newtheorem{cor}[theorem]{Corollary}
\theoremstyle{definition}
\newtheorem{rem}[theorem]{Remark}
\newtheorem{prop}[theorem]{Proposition}
\theoremstyle{remark}
\numberwithin{equation}{section}
\begin{document}


\title[Witt equivalence of conics]{Witt equivalence of function fields of conics}

\author{Pawe\l \ G\l adki}

\address{Institute of Mathematics,
University of Silesia, \newline \indent
ul. Bankowa 14, 40-007 Katowice, Poland \newline \indent
\and \newline \indent
Department of Computer Science,
AGH University of Science and Technology, \newline \indent
al. Mickiewicza 30, 30-059 Krak\'ow, Poland}
\email{pawel.gladki@us.edu.pl}

\author{\fbox{Murray Marshall}}

\thanks{Murray Marshall passed away in May 2015. Our community lost a brilliant mathematician and a wonderful man. We sorely miss him.}


\subjclass[2000]{Primary 11E81, 12J20 Secondary 11E04, 11E12}
\keywords{ symmetric bilinear forms, quadratic forms, Witt equivalence of fields, function fields, conic sections, valuations, Abhyankar valuations}

\abstract{Two fields are Witt equivalent if, roughly speaking, they have the same quadratic form theory. Formally, that is to say that their Witt rings of symmetric bilinear forms are isomorphic. This equivalence is well understood only in a few rather specific classes of fields. Two such classes, namely function fields over global fields and function fields of curves over local fields, were investigated by the authors in their earlier works \cite{gm2016} and \cite{gm2016a}. In the present work, which can be viewed as a sequel to the earlier papers, we discuss the previously obtained results in the specific case of function fields of conic sections, and apply them to provide a few theorems of a somewhat quantitive flavour shedding some light on the question of numbers of Witt non-equivalent classes of such fields.}

\maketitle

\section{Introduction}

One of the classical problems in bilinear algebra is to classify fields with respect to Witt equivalence, that is equivalence defined by isomorphism of their Witt rings of symmetric bilinear forms, which also includes fields of characteristic two. This problem is, in fact, manageable only when restricted to some specific classes of fields, which include trivial examples of quadratically closed fields, real closed fields, and finite fields, the case of local fields (\cite{l}), global fields (\cite{Petal}, \cite{Sz1}, \cite{szym97}), function fields in one variable over algebraically closed fields of characteristic $\neq 2$, and function fields in one variable over real closed fields (\cite{gh}, \cite{kop2}).

The authors of the present paper attempted to add two more classes of fields to this list, and investigated function fields over global fields \cite{gm2016} and function fields of curves over local fields \cite{gm2016a}, and managed to show that Witt equivalence of two function fields over global fields induces in a canonical way a bijection $v \leftrightarrow w$ between Abhyankar valuations $v$ of $K$ having residue field not finite of characteristic 2 and Abhyankar valuations $w$ of $L$ having residue field not finite of characteristic 2 (\cite[Theorem 7.5]{gm2016}). Subsequently, a variant of this theorem has been also established in the local case (\cite[Theorem 3.5]{gm2016a}). Numerous corollaries providing some insight into the question of how Witt equivalence in these cases is behaved have been also drawn.

In the present paper we apply these results to take a closer look at the question of the number of Witt non-equivalent classes of function fields of conics, and provide some enumerative results in the case of conics defined over certain number fields. Some of them generalize in a certain way to the case of conics defined over arbitrary local fields. The main new results of the paper are found in Section 4. Throughout the entire exposition the authors use the language of hyperfields, which seem to provide a natural and convenient language to study Witt equivalence. We recall basic terminology and establish fundamental connections between hyperfields, valuations and quadratic forms in Section 2. All of this is a summary of Section 2 in \cite{gm2016a}, which, in turn, is a summary of Sections 2--6 in \cite{gm2016}, and the reader more interested in all the technicalities is kindly referred to consult author's first paper \cite{gm2016} in the sequel. In Section 3 the authors prove a few additional facts on function fields of conics, and cite some old propositions that go back to Ernst Witt. The authors would like to believe that their results can be thought of as extensions of these beautiful, classical theorems by old masters.

\section{Hyperfields, valuations and Witt equivalence}

Hyperfields seem to provide a convenient and very natural way to describe Witt equivalence. In what follows we shall review the basic concepts and definitions used later in the paper. By a \it hyperfield \rm we shall understand a system $(H, +, \cdot, -, 0, 1)$, where $H$ is a set, $+$ is a multivalued binary operation on $H$, i.e., a function from $H\times H$ to the set of all subsets of $H$, $\cdot$ is a binary operation on $H$, $- : H \rightarrow H$ is a function, and $0,1$ are elements of $H$ such that

\begin{enumerate}
\item[(I)] $(H,+,-,0)$ is a {\it canonical hypergroup}, i.e. for all $a,b,c \in H$,
\begin{enumerate}
\item[(1)] $c\in a+b$ $\Rightarrow$ $a \in c+(-b)$,
\item[(2)] $a \in b+0$ iff $a=b$,
\item[(3)]$(a+b)+c = a+(b+c)$,
\item[(4)] $a+b= b+a$;
\end{enumerate}
\item[(II)] $(H,\cdot, 1)$ is a commutative monoid, i.e. for all $a,b,c \in H$, 
\begin{enumerate}
\item[(1)] $(ab)c=a(bc)$, 
\item[(2)]$ab=ba$,
\item[(3)] $a1=a$;
\end{enumerate}
\item[(III)] $a0 = 0$ for all $a\in H$;
\item[(IV)] $a(b+c) \subseteq ab+ac$;
\item[(V)] $1\ne 0$ and every non-zero element has a multiplicative inverse.
\end{enumerate}

Hyperfields form a category with morphisms from $H_1$ to $H_2$, where $H_1$, $H_2$ are hyperfields, defined to be functions $\alpha : H_1 \rightarrow H_2$ which satisfy $\alpha(a+b) \subseteq \alpha(a)+\alpha(b)$, $\alpha(ab) = \alpha(a)\alpha(b)$, $\alpha(-a)=-\alpha(a)$, $\alpha(0)=0$, $\alpha(1)=1$. For a subgroup $T$ of $H^*$, where $H$ is a hyperfield, denote by $H/_m T$ the set of equivalence classes with respect to the equivalence relation $\sim$ on $H$ defined by 
$$a\sim b \mbox{ if and only if } as = bt \mbox{ for some } s,t\in T.$$ 
The operations on $H/_m T$ are the obvious ones induced by the corresponding operations on $H$. Denote by $\overline{a}$ the equivalence class of $a$. Multiplication is defined in a natural way, and addition is set as follows:
$$\overline{a} \in \overline{b}+\overline{c} \mbox{ if and only if } as \in bt+cu \mbox{ for some }s,t,u \in T.$$
$(H/_m T, +,\cdot, -, 0,1)$ is then a hyperfield that we shall refer to as {\em quotient hyperfield}.  For a hyperfield $H = (H, +,\cdot, -, 0,1)$ the \it prime addition \rm on $H$ is defined by
\begin{linenomath}\[a+'b = \begin{cases} a+b, &\text{ if one of } a,b \text{ is zero }, \\ a+b\cup \{ a,b\}, &\text{ if } a \ne 0, \ b\ne 0,\ b \ne -a, \\ H, &\text{ if } a \ne 0, \ b\ne 0,\ b = -a. \end{cases}\]\end{linenomath}
For any hyperfield $H:=(H, +,\cdot, -, 0,1)$, $H':=(H, +',\cdot, -, 0,1)$ is also a hyperfield \cite[Proposition 2.1]{gm2016}. We shall call $H'$ the \it prime \rm of the hyperfield $H$. The motivation for this definition comes from the following discussion: let $K$ be a field and define the \it quadratic hyperfield \rm of $K$, denoted $Q(K)$, to be the prime of the hyperfield $K/_m K^{*2}$. Now let $W(K)$ be the Witt ring of non-degenerate symmetric bilinear forms over $K$; see \cite{l}, \cite{M1} or \cite{w} for the definition in case $\operatorname{char}(K) \ne 2$ and \cite{KR}, \cite{krw} or \cite{mh} for the definition in the general case. Recall that a (non-degenerate diagonal) {\it binary form} over $K$ is an ordered pair $\langle \overline{a},\overline{b}\rangle$, $\overline{a}, \overline{b} \in K^*/K^{*2}$, and its \it value set \rm, denoted by  $D_K\langle \overline{a},\overline{b} \rangle$, is the set of non-zero elements of $\overline{a}+\overline{b}$.  Now, a hyperfield isomorphism $\alpha$ between two quadratic hyperfields $Q(K)$ and $Q(L)$, where $K,L$ are fields, can be viewed as a group isomorphism $\alpha : K^*/K^{*2} \rightarrow L^*/L^{*2}$ such that $\alpha(-\overline{1}) = -\overline{1}$ and \begin{linenomath}\[\alpha(D_K\langle \overline{a},\overline{b}\rangle) = D_L\langle \alpha(\overline{a}),\alpha(\overline{b})\rangle \text{ for all } \overline{a},\overline{b} \in K^*/K^{*2}.\]\end{linenomath} Combining the results in  \cite{bm}, \cite{h}, and \cite{M1} one gets that two fields $K$ and $L$ are Witt equivalent, denoted $K\sim L$, iff $Q(K)$ and $Q(L)$ are isomorphic as hyperfields. Moreover, a morphism $\iota: H_1 \rightarrow H_2$ between two hyperfields $H_1$ and $H_2$ induces a morphism $\overline{\iota} : H_1/_m \Delta \rightarrow H_2$ where $\Delta: = \{ x\in H_1^* : \iota(x)=1\}$. The morphism $\iota$ is said to be a \it quotient morphism \rm if $\overline{\iota}$ is an isomorphism, or, equivalently, if $\iota$ is surjective, and 
$$\iota(c) \in \iota(a)+\iota(b) \mbox{ if and only if } cs\in at+bu \mbox{ for some } s,t,u \in \Delta.$$  
A morphism $\iota : H_1 \rightarrow H_2$ is said to be a \it group extension \rm if $\iota$ is injective,
every $x\in H_2^* \backslash \iota(H_1^*)$ is \it rigid \rm, meaning $1+x \subseteq \{ 1,x\}$, and 
$$\iota(1+y)=1+\iota(y), \mbox{ for all } y \in H_1, y\ne -1.$$

For a field $K$ we adopt the standard notation from valuation theory: if $v$ is a valuation on $K$, $\Gamma_v$ denotes the value group, $A_v$ the valuation ring, $M_v$ the maximal ideal, $U_v$ the unit group, $K_v$ the residue field, and $\pi = \pi_v: A_v \rightarrow K_v$ the canonical homomorphism, i.e., $\pi(a) = a+M_v$. $v$ is \it discrete rank one \rm if $\Gamma_v = \mathbb{Z}$. Denote $T= (1+M_v)K^{*2}$. Consider the canonical group isomorphism $\alpha : U_vK^{*2}/(1+M_v)K^{*2} \rightarrow K_v^*/K_v^{*2}$ induced by 
$$x \in U_v \mapsto \pi(x) \in K_v^*.$$ 
and define $\iota : Q(K_v) \rightarrow K/_m T$ by $\iota(0) = 0$ and $\iota(a) = \alpha^{-1}(a)$ for $a \in K_v^*/K_v^{*2}$. If $v$ is non-trivial, then the map $Q(K) \rightarrow K/_mT$ defined by $\overline{x} \mapsto xT$  is a quotient morphism and $\iota$ is a group extension. The cokernel of the group embedding $\alpha^{-1} : K_v^*/K_v^{*2} \rightarrow K^*/T$ is equal to $K^*/U_vK^{*2} \cong \Gamma_v/2\Gamma_v$. We reflect this by calling $K/_mT$ a \it group extension of $Q(K_v)$ by the group \rm $\Gamma_v/2\Gamma_v$. If $v$ is non-trivial and $\operatorname{char}(K_v) \ne 2$, then $K/_m T$ can be naturally identified with $Q(\tilde{K}_v)$, where $\tilde{K}_v$ denotes the henselization of $(K,v)$. If $v,v'$ are valuations on $K$ with $v \preceq v'$, i.e. such that $v'$ is a coarsening of $v$, meaning $A_v \subseteq A_{v'}$, then $M_{v'} \subseteq M_v$, and, consequently, $(1+M_{v'})K^{*2} \subseteq (1+M_v)K^{*2}$. If we denote by $\overline{v}$ the valuation on $K_{v'}$ induced by $v$, that is
$$\overline{v}(\pi_{v'}(a)) = v(a), \mbox{ for } a\in U_{v'},$$
then the valuations $\overline{v}$ and $v$ have the same residue field. If $v$ and $v'$ are non-trivial and $v'$ is a proper coarsening of $v$, meaning $A_v \subsetneq A_{v'}$, then $K/_m (1+M_v)K^{*2}$ is a group extension of the hyperfield $K_{v'}/_m(1+M_{\overline{v}})K_{v'}^{*2}$ and the following diagram is commutative:
\begin{linenomath}\[\xymatrix{
Q(K) \ar[r] & K/_m(1+M_{v'})K^{*2} \ar[r] & K/_m(1+M_v)K^{*2} \\
& Q(K_{v'}) \ar[u] \ar[r] & K_{v'}/_m (1+M_{\overline{v}})K_{v'}^{*2} \ar[u]\\
& & Q(K_v) \ar[u]
}\]\end{linenomath}
For a subgroup $T$ of $K^*$ we say that $x\in K^*$ is \it $T$-rigid \rm if $T+Tx \subseteq T\cup Tx$, and denoting by 
\begin{linenomath}\[B(T) := \{ x \in K^*: \text{ either } x \text{ or } -x \text{ is not } T\text{-rigid}\}\]\end{linenomath}
we will refer to the elements of $B(T)$ as to the \it $T$-basic \rm elements.
If $x\in K^*$ is $T$-rigid and
$y=tx$, for some $t\in T$, then $y$ is $T$-rigid, so that $B(T)$ is a union of cosets of $T$.
If $\pm T = B(T)$, and either $-1\in T$ or $T$ is additively closed, we shall say that the subgroup $T$ is {\it exceptional.}
If $H \subseteq K^*$ is a subgroup containing $B(T)$, then there exists a subgroup $\hat{H}$ of $K^*$ such that $H \subseteq \hat{H}$ and $(\hat{H}:H) \le 2$, and a valuation $v$ of $K$ such that $1+M_v \subseteq T$ and $U_v \subseteq \hat{H}$. Moreover, $\hat{H}$ can be taken to be simply $H$, unless $T$ is exceptional  \cite[Theorem 2.16]{aej}. $B(K^{*2})$ is a subgroup of $K^*$, and in the case when $T = (1+M_v)K^{*2}$, for some non-trivial valuation $v$ of $K$, $B(T) \subseteq U_vK^{*2}$ and \begin{linenomath}\[B(T) = \{ x\in K^* : \overline{x} = \iota(\overline{y}) \text{ for some } y \in B(K_v^{*2})\},\]\end{linenomath} where $\iota : Q(K_v) \hookrightarrow K/_mT$ is the morphism described above.  $B(T)$ is a group and the group isomorphism $\iota : K_v^*/K_v^{*2} \rightarrow U_vK^{*2}/T$ induces a group isomorphism $B(K_v^{*2})/K_v^{*2} \rightarrow B(T)/T$. $T$ is exceptional if and only if $K_v^{*2}$ is exceptional. We will make frequent use of the following result:

\begin{theorem}[{\cite[Theorem 5.3]{gm2016}}] \label{main lemma} Suppose $K$, $L$ are fields, $\alpha : Q(K) \rightarrow Q(L)$ is a hyperfield isomorphism and $v$ is a valuation on $K$ such that $\Gamma_v$ is finitely generated as an abelian group. Suppose either
(i) the basic part of $(1+M_v)K^{*2}$ is $U_vK^{*2}$ and $(1+M_v)K^{*2}$ is unexceptional, or
(ii) the basic part of $(1+M_v)K^{*2}$ is $(1+M_v)K^{*2}$ and $(1+M_v)K^{*2}$ has index 2 in $U_vK^{*2}$.
Then there exists a valuation $w$ on $L$ such that the image of $(1+M_v)K^{*2}/K^{*2}$ under $\alpha$ is $(1+M_w)L^{*2}/L^{*2}$ and $(L^*:U_wL^{*2}) \ge (K^*:U_vK^{*2})$. If (i) holds, then
the image of $U_vK^{*2}/K^{*2}$ under $\alpha$ is $U_wL^{*2}/L^{*2}$.
\end{theorem}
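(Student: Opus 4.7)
The plan is to transfer the subgroup $T := (1+M_v)K^{*2}$ through $\alpha$ to a subgroup $T' \subseteq L^*$ of the same hyperfield-theoretic type, to apply \cite[Theorem 2.16]{aej} to produce a candidate valuation $w$ on $L$, and then to upgrade the resulting inclusions to the equalities claimed. Concretely, let $T' \subseteq L^*$ be the subgroup (containing $L^{*2}$) whose square-class image is $\alpha(T/K^{*2})$. Because $\alpha$ preserves the sum-set data of the quadratic hyperfields, the notions of rigidity, basicness and exceptionality all transfer through $\alpha$: it carries $B(T)/K^{*2}$ bijectively onto $B(T')/L^{*2}$, and $T'$ is exceptional if and only if $T$ is.

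Next, I would feed $T'$ together with a suitable subgroup $H \supseteq B(T')$ of $L^*$ into \cite[Theorem 2.16]{aej}. In case (i), take $H = B(T')$; unexceptionality of $T'$ permits $\hat H = H$, so one obtains a valuation $w$ with $1+M_w \subseteq T'$ and $U_w \subseteq H = B(T')$ (which corresponds via $\alpha$ to $U_v K^{*2}$). In case (ii), take $H = T' = B(T')$ and obtain $w$ with $1+M_w \subseteq T'$ and $U_w \subseteq \hat H$, where $(\hat H : H) \le 2$. Finite generation of $\Gamma_v$ ensures $(K^* : T) < \infty$, hence $(L^* : T') < \infty$, so $w$ is automatically non-trivial.

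The remaining work is to upgrade the containments into the claimed equalities. For the image equality in case (i), one squeezes $U_w L^{*2}$ between $1 + M_w \subseteq T'$ below and $U_w L^{*2} \subseteq B(T')$ above, and observes that an element of $B(T') \setminus U_w L^{*2}$ would be a non-unit of $w$ and hence $T'$-rigid, contradicting basicness; this forces $U_w L^{*2} = B(T')$, and the commutative diagram of group extensions from Section~2 then identifies residue quotients and forces $(1 + M_w) L^{*2} = T'$. Case (ii) is parallel but requires absorbing the potential index-$2$ gap between $H$ and $\hat H$, for which the hypothesis $(U_v K^{*2} : T) = 2$ is precisely what is needed. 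The index inequality $(L^* : U_w L^{*2}) \ge (K^* : U_v K^{*2})$ is then a direct consequence of transporting indices through $\alpha$, with equality in case (i). The main obstacle I anticipate is the equality $(1 + M_w) L^{*2} = T'$ in case (ii): \cite[Theorem 2.16]{aej} supplies only one inclusion, and ruling out the possibility that $w$ is a proper coarsening of the ``correct'' valuation requires delicate index bookkeeping across the diagram of hyperfield extensions, crucially using the finite generation of $\Gamma_v$.
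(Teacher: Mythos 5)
First, a point of reference: this paper does not prove Theorem \ref{main lemma} at all --- it is imported verbatim from \cite[Theorem 5.3]{gm2016}, so the only ``proof'' here is a citation. Your skeleton (transport the rigidity/basicness/exceptionality data of $T=(1+M_v)K^{*2}$ through $\alpha$ to a subgroup $T'\subseteq L^*$, invoke \cite[Theorem 2.16]{aej} to produce $w$ with $1+M_w\subseteq T'$ and $U_w\subseteq\hat H$, then upgrade inclusions to equalities) is indeed the strategy of the cited proof, and the transfer step is sound: $T$-rigidity of $x$ says that $D_K\langle \overline{t_1},\overline{t_2 x}\rangle\subseteq (T\cup Tx)/K^{*2}$ for all $\overline{t_1},\overline{t_2}\in T/K^{*2}$, additive closedness and $-1\in T$ are likewise hyperfield data, and $\alpha(-\overline{1})=-\overline{1}$, so basicness and exceptionality do transfer.

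The gaps are in the upgrade step, which is where the entire content of the theorem lives. (1) Your claim that finite generation of $\Gamma_v$ gives $(K^*:T)<\infty$ is false: $(K^*:T)=|\Gamma_v/2\Gamma_v|\cdot|K_v^*/K_v^{*2}|$, and the second factor is typically infinite (e.g.\ $K_v$ a number field); finite generation only controls $(K^*:U_vK^{*2})$. (2) The ``squeeze'' in case (i) does not close. The correct rigidity statement is: if $1+M_w\subseteq T'$ and $x\notin U_wT'$, then $x$ (and $-x$) is $T'$-rigid; this yields $B(T')\subseteq U_wT'$, \emph{not} $B(T')\subseteq U_wL^{*2}$. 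An element of $B(T')\setminus U_wL^{*2}$ may perfectly well lie in $U_wT'$, so no contradiction with basicness arises; passing from $U_wT'$ to $U_wL^{*2}$ requires $T'\subseteq U_wL^{*2}$, which is (a consequence of) the very equality $(1+M_w)L^{*2}=T'$ you are trying to prove --- the argument is circular. (3) Consequently the central equality $(1+M_w)L^{*2}=T'$ is unproven in \emph{both} cases, not only in case (ii) as you suggest: \cite[Theorem 2.16]{aej} supplies only $1+M_w\subseteq T'$, and even granting $U_wL^{*2}=B(T')$, the quotient $T'/(1+M_w)L^{*2}$ embeds into $U_wL^{*2}/(1+M_w)L^{*2}\cong L_w^*/L_w^{*2}$ and nothing you have written forces it to be trivial; the appeal to ``the commutative diagram of group extensions'' is a gesture, not an argument. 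Closing this requires a genuinely additional input --- in \cite{gm2016} it is a careful choice of $w$ among the valuations compatible with $T'$ together with index bookkeeping across the group-extension decompositions, using that $\Gamma_v/2\Gamma_v$ is finite. So: right scaffolding, but the load-bearing step is missing and one auxiliary finiteness claim is wrong.
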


If $K$, $L$ are fields, $v$ and $w$ are non-trivial, and $\alpha : Q(K) \rightarrow Q(L)$ is a hyperfield isomorphism such that the image of $(1+M_v)K^{*2}/K^{*2}$ under $\alpha$ is $(1+M_w)L^{*2}/L^{*2}$, then $\alpha$ induces a hyperfield isomorphism $K/_m (1+M_v)K^{*2} \rightarrow L/_m (1+M_w)L^{*2}$ such that the diagram
\begin{linenomath}\begin{equation}\label{d1} \xymatrix{
Q(K) \ar[r] \ar[d] & Q(L) \ar[d] \\
K/_m(1+M_v)K^{*2} \ar[r] & L/_m(1+M_w)L^{*2}}\end{equation}\end{linenomath}
commutes. If, in addition, the image of $U_vK^{*2}/K^{*2}$ under $\alpha$ is $U_wL^{*2}/L^{*2}$, then $\alpha$ induces a hyperfield isomorphism $Q(K_v)\rightarrow Q(L_w)$ and a group isomorphism $\Gamma_v/2\Gamma_v \rightarrow \Gamma_w/2\Gamma_w$ such that the diagrams
\begin{linenomath}\begin{equation}\label{d2} \xymatrix{
K/_m(1+M_v)K^{*2} \ar[r] & L/_m(1+M_w)L^{*2} \\
 Q(K_v) \ar[u] \ar[r]  & Q(L_w) \ar[u]
}\end{equation}\end{linenomath}
and
\begin{linenomath}\begin{equation}\label{d3} \xymatrix{
Q(K)^* \ar[r] \ar[d] & Q(L)^* \ar[d] \\
\Gamma_v/2\Gamma_v \ar[r] & \Gamma_w/2\Gamma_w
}\end{equation}\end{linenomath}
both commute. 

Recall that the \it nominal transcendence degree \rm of $K$ is defined to be
\begin{linenomath}\[\operatorname{ntd}(K) := \begin{cases} \operatorname{trdeg}(K:\mathbb{Q}) &\text{ if } \operatorname{char}(K) = 0 \\ \operatorname{trdeg}(K:\mathbb{F}_p)-1 &\text{ if } \operatorname{char}(K) = p\ne 0\end{cases}.\]\end{linenomath}
For an abelian group $\Gamma$, its \it rational rank \rm of $\Gamma$, denoted $\operatorname{rk}_{\mathbb{Q}}(\Gamma)$, is defined to be the dimension of the $\mathbb{Q}$-vector space $\Gamma \otimes_{\mathbb{Z}} \mathbb{Q}$. If $K$ is a function field over $k$ and $v$ is a valuation on $K$, the \it Abhyankar inequality \rm asserts that \begin{linenomath}\[\operatorname{trdeg}(K:k) \ge \operatorname{rk}_{\mathbb{Q}}(\Gamma_v/\Gamma_{v|k}) + \operatorname{trdeg}(K_v: k_{v|k}),\]\end{linenomath} where $v|k$ denotes the restriction of $v$ to $k$.  The valuation $v$ is \it Abhyankar \rm (relative to $k$)  if \begin{linenomath}\[\operatorname{trdeg}(K:k) = \operatorname{rk}_{\mathbb{Q}}(\Gamma_v/\Gamma_{v|k}) + \operatorname{trdeg}(K_v: k_{v|k}).\]\end{linenomath}
In this case it is known that $\Gamma_v/\Gamma_{v|k}$ is finitely generated and $K_v$ is a function field over $k_{v|k}$.

\section{function fields of conics}

Let $k$ be a field of characteristic $\ne 2$.

\begin{prop} $\frac{k[x,y]}{(ax^2+by^2-1)}$ is a principal ideal domain, for each $a,b \in k^*$.
\end{prop}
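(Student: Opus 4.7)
The plan is to prove the claim in three stages: that $p := ax^2+by^2-1$ is irreducible (so $R$ is an integral domain), that $\operatorname{Spec} R$ is smooth (so $R$ is a Dedekind domain), and that $\operatorname{Pic}(R) = 0$ (so $R$ is a PID). For the first, viewing $p$ as a quadratic in $y$ over $k[x]$, the discriminant equals a nonzero $k$-scalar times $1-ax^2$; this polynomial cannot be a square in $k[x]$ because a degree-$\le 2$ square $(cx+d)^2$ has middle coefficient $2cd$ which vanishes (as $\operatorname{char}k\ne 2$) only when $c=0$ or $d=0$, either option being incompatible with the nonzero constant and $x^2$ coefficients of $1-ax^2$. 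By Gauss it is not a square in $k(x)$ either, so $p$ is irreducible and $R$ is a domain. For the second stage, the partials $\partial p/\partial x = 2ax$ and $\partial p/\partial y = 2by$ vanish simultaneously only at the origin, where $p(0,0) = -1 \ne 0$; hence $C := V(p)$ is smooth, and $R$, being regular of dimension $1$, is a Dedekind domain.

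The third stage is the crux. I would compute $\operatorname{Pic}(R)$ by passing to the projective closure $\overline C : aX^2+bY^2 = Z^2 \subset \mathbb P^2_k$, a smooth genus-zero conic, and the boundary divisor $D_\infty := \overline C \cap \{Z=0\}$ of degree $2$. Excision gives $\operatorname{Pic}(R) = \operatorname{Pic}(\overline C)/\langle [D_\infty]\rangle$. When $-ab \in k^{*2}$, $D_\infty$ splits into two distinct $k$-rational points, projection from one identifies $C$ with $\mathbb G_{m,k}$, and $R \cong k[t, t^{-1}]$ is plainly a PID. When $-ab \notin k^{*2}$, $D_\infty$ is a single closed point of degree $2$; since $\overline C$ has genus $0$, $\operatorname{Pic}(\overline C)$ injects into $\mathbb Z$ via degree, and one is reduced to showing that $[D_\infty]$ generates the image of this embedding.

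The main obstacle is precisely this last step. The degree image of $\operatorname{Pic}(\overline C) \hookrightarrow \mathbb Z$ is $2\mathbb Z$ exactly when $\overline C$ carries no $k$-rational point, in which case $[D_\infty]$ does generate and the proof concludes; but if $\overline C$ admits a rational point then the image is all of $\mathbb Z$ and $[D_\infty]$ has index $2$, so a purely Picard-theoretic argument stalls at a quotient of $\mathbb Z/2$. Closing the argument in this subcase calls for a direct construction: starting from the tangent-line identity $(ax_0 x + by_0 y - 1) = \mathfrak m_P^2$ at a rational point $P = (x_0, y_0)$, one would need an additional ingredient --- perhaps the rational parametrization of $\overline C$ through $P$ that reduces $C$ to an open subset of $\mathbb P^1_k$ --- to exhibit an explicit single generator of $\mathfrak m_P$ itself.
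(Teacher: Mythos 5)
Your three-stage plan is sound, stages 1 and 2 are correct, and the Picard-group analysis in stage 3 is also correct --- including the place where you report that it stalls. The paper offers no argument to compare against (its entire proof is a citation to \cite{gm}), so the substantive question is whether the subcase you could not close can in fact be closed. It cannot: the statement is false there. Take $k=\mathbb Q$ and $a=b=1$, so $R=\mathbb Q[x,y]/(x^2+y^2-1)$. Here $-ab=-1\notin\mathbb Q^{*2}$, so $D_\infty$ is a single closed point of degree $2$, while $(1,0)$ is a rational point; your excision sequence then gives $\operatorname{Pic}(R)\cong\mathbb Z/2$, generated by the class of $\mathfrak m=(1-x,y)$. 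Concretely, $\mathfrak m^2=(1-x)$ (because $y^2=(1-x)(1+x)$ and $(1-x)+(1+x)=2$ is a unit), yet $\mathfrak m$ is not principal: a unit of $R$ has divisor supported on the single degree-$2$ point at infinity and hence trivial divisor, so $R^*=\mathbb Q^*$, and a generator $g$ of $\mathfrak m$ would satisfy $g^2=c(1-x)$ with $c\in\mathbb Q^*$; but in $R\otimes_{\mathbb Q}\mathbb Q(i)\cong\mathbb Q(i)[u,u^{-1}]$ with $u=x+iy$ one has $1-x=-(u-1)^2/(2u)$, which is not a unit times the square of a Laurent polynomial. Equivalently, $y\cdot y=(1-x)(1+x)$ is a factorization into pairwise non-associate irreducibles (each factor has divisor either $2P$ or $P+P'$ with $P,P'$ rational points, and a single rational point is non-principal by your degree argument), so $R$ is not even a UFD. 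The tangent-line identity $\mathfrak m_P^2=(ax_0x+by_0y-1)$ that you hoped to refine is exactly the order-$2$ relation in the class group and cannot yield a generator of $\mathfrak m_P$ itself.

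What your argument actually establishes is the corrected statement: $k[x,y]/(ax^2+by^2-1)$ is always a Dedekind domain whose class group has order at most $2$, and it is a PID if and only if either $-ab\in k^{*2}$ (in which case $R\cong k[t,t^{-1}]$) or the conic $ax^2+by^2=z^2$ has no $k$-rational point (in which case, by Springer's theorem, every closed point of $\overline C$ has even degree, so $[D_\infty]$ generates $\operatorname{Pic}(\overline C)$ under the degree embedding). In particular the proposition fails for $\mathbb Q_{1,1}$ and, more generally, whenever $(\frac{a,b}{k})$ splits but $-ab\notin k^{*2}$; the appeal to it later, in the proof of Theorem \ref{genus zero case} (``$\ldots$ is a principal ideal domain with unit group $\ell^*$''), should be reexamined in that light. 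So the ``gap'' you flagged is not a defect of your method but a genuine defect of the claim, and your write-up would be improved by stating it as such rather than as a step awaiting an extra ingredient.
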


\begin{proof} This is well known, see \cite{gm}.
\end{proof}

Set $k_{a,b} := \operatorname{qf}\frac{k[x,y]}{(ax^2+by^2-1)}$, the quotient field of $\frac{k[x,y]}{(ax^2+by^2-1)}$. 
We assume always that $a,b \in k^*$. 

\begin{prop}\label{trivial} The field of constants of $k_{a,b}$ over $k$ is equal to $k$.
\end{prop}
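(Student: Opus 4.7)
The field of constants of $k_{a,b}$ over $k$ is, by definition, the relative algebraic closure of $k$ in $k_{a,b}$. The plan is to show it equals $k$ by proving that $f(x,y) := ax^2 + by^2 - 1$ is \emph{absolutely} irreducible over $k$, i.e., remains irreducible in $\bar k[x,y]$. This suffices because absolute irreducibility of $f$ implies that $(k[x,y]/(f)) \otimes_k \bar k \cong \bar k[x,y]/(f)$ is an integral domain, and localising gives that $k_{a,b} \otimes_k \bar k$ is a domain too; this is precisely the condition for $k_{a,b}/k$ to be a regular extension, which in particular forces $k$ to be algebraically closed in $k_{a,b}$.

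For absolute irreducibility, pick $\alpha, \beta \in \bar k$ with $\alpha^2 = a$ and $\beta^2 = b$, and perform the $\bar k$-linear change of variables $X = \alpha x$, $Y = \beta y$. This reduces the problem to irreducibility of $g(X,Y) := X^2 + Y^2 - 1$ in $\bar k[X,Y]$. Viewing $g$ as a monic quadratic in $X$ with coefficients in $\bar k(Y)$, a factorisation would force $-(Y^2-1) = (1-Y)(1+Y)$ to be a square in $\bar k(Y)$; but this is a squarefree polynomial of positive degree in $\bar k[Y]$, hence not a square. Thus $g$ is irreducible over $\bar k(Y)$, and being primitive in $\bar k[Y][X]$, by Gauss's lemma it is irreducible in $\bar k[X,Y]$ as well.

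There is no substantial obstacle here; the only point that deserves some care is the implication ``absolute irreducibility of $f$ $\Rightarrow$ $k$ algebraically closed in $k_{a,b}$'', which is handled through the regular extension characterisation cited above. A purely geometric alternative would be to note that over $\bar k$ the conic is smooth and possesses the rational point $(1/\alpha, 0)$, hence is $\bar k$-birational to the projective line, so any constant of $k_{a,b}$ maps into the constant field $\bar k$ of $\bar k(t)$ and, being also algebraic over $k$ and lying in $k_{a,b}$, must in fact lie in $k$ by the same regular-extension dichotomy.
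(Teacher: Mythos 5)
Your proof is correct, but it takes a genuinely different route from the paper's. The paper argues entirely inside the tower $k \subseteq k(x) \subseteq k(x)\bigl(\sqrt{(1-ax^2)/b}\bigr) = k_{a,b}$: given $f = f_0 + f_1\sqrt{(1-ax^2)/b}$ algebraic over $k$, its conjugate $\overline{f}$ is also algebraic, hence so are $f_0$ and $f_0^2 - f_1^2(1-ax^2)/b$; since $(1-ax^2)/b$ is not (a constant times) a square in $k(x)$, this forces $f_1 = 0$ and then $f_0 \in k$ because $k$ is the constant field of $k(x)$. Your argument instead establishes absolute irreducibility of $ax^2+by^2-1$ via the diagonalising change of variables and Gauss's lemma, and then invokes the characterisation of regular extensions ($K \otimes_k \bar k$ a domain $\Rightarrow$ $k$ relatively algebraically closed in $K$). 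Both are valid; note that each uses $\operatorname{char}(k) \ne 2$ at some point (the paper divides by $2$ and needs $1-ax^2$ squarefree, you need $(1-Y)(1+Y)$ squarefree — in characteristic $2$ the form becomes a square and the statement fails), and this hypothesis is indeed standing at the start of the paper's Section 3. Your approach is less elementary but more robust and conceptual: it shows the stronger statement that $k_{a,b}/k$ is a regular extension and generalises immediately to other absolutely irreducible hypersurfaces, whereas the paper's computation is short, self-contained, and exploits the specific presentation of $k_{a,b}$ as a quadratic extension of a rational function field. The geometric alternative in your last paragraph is fine as a sketch but, as you note, still leans on the same regular-extension dichotomy to descend from $\bar k$ to $k$, so it does not really bypass that input.
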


\begin{proof} Clearly $k_{a,b} = k(x)(\sqrt{\frac{1-ax^2}{b}})$. Suppose $f = f_0+f_1\sqrt{\frac{1-ax^2}{b}}$, $f_0,f_1\in k(x)$, is algebraic over $k$. Then $\overline{f} = f_0-f_1\sqrt{\frac{1-ax^2}{b}}$ is also algebraic over $k$. Consequently, $f_0 = (f+\overline{f})/2$ and $f_0^2-f_1^2(\frac{1-ax^2}{b}) = f\overline{f}$ are algebraic over $k$. It follows that $f_1^2(\frac{1-ax^2}{b})$ is algebraic over $k$, i.e., $f_1=0$, and $f_0 \in k$.
\end{proof}

For $a,b \in k^*$, $(\frac{a,b}{k})$ denotes the quaternion algebra over $k$, i.e., the 4-dimension central simple algebra over $k$ generated by $i,j$ subject to $i^2=a$, $j^2=b$, $ji=-ij$. We identify quaternion algebras over $k$ which are isomorphic as $k$-algebras, equivalently, are equal as elements of the Brauer group of $k$.

\begin{prop} \label{prop_6} The following are equivalent:

(1) $(\frac{a,b}{k})=1$ (i.e., $(\frac{a,b}{k})$ splits over $k$).

(2) $\langle 1,-a\rangle \otimes \langle 1, -b\rangle \sim 0$ over $k$.

(3) $1 \in D_{k}\langle a,b\rangle$.

(4) The conic $ax^2+by^2=1$ has a rational point.

(5) $k_{a,b}$ is purely transcendental over $k$.
\end{prop}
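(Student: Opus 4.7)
The plan is to verify the chain $(1) \Leftrightarrow (2) \Leftrightarrow (3) \Leftrightarrow (4) \Leftrightarrow (5)$, each of which is classical; the goal is to assemble the standard ingredients in the right order and then handle the less routine step $(5) \Rightarrow (4)$ carefully.

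For $(1) \Leftrightarrow (2)$, I would recall that the reduced norm form of the quaternion algebra $(\frac{a,b}{k})$ is, up to sign in the Pfister convention, the $2$-fold Pfister form $\langle\langle -a,-b\rangle\rangle = \langle 1,-a\rangle \otimes \langle 1,-b\rangle$, and that a quaternion algebra splits iff its norm form is hyperbolic. The equivalence $(3) \Leftrightarrow (4)$ is just the definition of $D_k\langle a,b\rangle$: writing $1 \in D_k\langle a,b\rangle$ unwinds to the existence of $x,y \in k$ with $ax^2+by^2=1$.

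The step $(2) \Leftrightarrow (3)$ is the main quadratic form content. I would argue $1 \in D_k\langle a,b\rangle$ iff $\langle a,b,-1\rangle$ is isotropic, iff its Witt-equivalent form $\langle 1,-a,-b\rangle$ is isotropic. Since $\langle 1,-a,-b\rangle$ is the pure subform of the $2$-fold Pfister form $\varphi = \langle 1,-a\rangle \otimes \langle 1,-b\rangle = \langle 1,-a,-b,ab\rangle$, one has the standard fact that a Pfister form is isotropic iff it is hyperbolic iff its pure part is isotropic (equivalently represents $1$). Hence isotropy of $\langle 1,-a,-b\rangle$ is equivalent to $\varphi \sim 0$, which is $(2)$.

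The remaining equivalence $(4) \Leftrightarrow (5)$ is a genus zero statement. For $(4) \Rightarrow (5)$ one parametrizes the conic through the rational point by drawing lines, obtaining a birational $k$-isomorphism between the smooth projective conic $C: ax^2+by^2=z^2$ and $\mathbb{P}^1_k$, whence $k_{a,b} = k(C) \cong k(t)$. The less routine direction is $(5) \Rightarrow (4)$: assuming $k_{a,b} \cong k(t)$, I would argue that $C$ is a smooth projective curve of genus $0$ over $k$ which is $k$-birational to $\mathbb{P}^1_k$; since smooth projective curves in dimension one are determined by their function fields up to $k$-isomorphism, $C \cong_k \mathbb{P}^1_k$, and a $\mathbb{P}^1$ certainly has $k$-rational points, giving $(4)$. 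The main obstacle I expect is framing this last implication cleanly within the proposition's elementary setup; if one prefers to avoid invoking curve theory, an alternative is to note that if $k_{a,b} = k(t)$ then the generators $x, y$ of $k_{a,b}$ over $k$ can be written as rational functions in $t$, and specializing $t$ to any value in $k$ outside a finite set of poles produces a pair $(x(t_0), y(t_0)) \in k^2$ satisfying $ax^2+by^2=1$ by Proposition \ref{trivial}.
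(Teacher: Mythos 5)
Your proposal is correct and follows essentially the same route as the paper: the equivalences $(1)\Leftrightarrow(2)\Leftrightarrow(3)\Leftrightarrow(4)$ are dispatched as standard quadratic form theory (the paper simply cites them as well known; your Pfister-form argument is a correct filling-in of the details), and $(4)\Leftrightarrow(5)$ is done by line parametrization in one direction and, in the other, by exactly the specialization argument the paper uses (write $x,y$ as rational functions of $t$ and evaluate at a point avoiding the poles). One small point the paper makes explicit and your elementary variant omits: when $|k|<\infty$ there may be no $t_0\in k$ avoiding the zeros of the common denominator, so the specialization argument breaks down; the paper patches this by noting that over a finite field the binary form $\langle a,b\rangle$ is universal, so $(4)$ holds unconditionally there. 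Your first variant of $(5)\Rightarrow(4)$ via genus-zero curve theory does cover all $k$ uniformly, though you should add that a $k$-point of the projective conic $ax^2+by^2=z^2$ lying at infinity makes $\langle a,b\rangle$ isotropic, hence universal, which again yields an affine point; with that remark either of your two variants closes the argument.
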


\begin{proof} The equivalence of (1), (2), (3) and (4) is well-known from quadratic form theory.
If $(p,q)$ is a rational point of $ax^2+by^2=1$ then $k_{a,b} = k(z)$ where $z := \frac{y-q}{x-p}$. Conversely, if $k_{a,b} = k(z)$ then, choosing $f(z),g(z),h(z) \in k[z]$ so that $x= \frac{f(z)}{h(z)}$, $y=\frac{g(z)}{h(z)}$, and choosing $r\in k$ so that $h(r)\ne 0$, one sees that $(\frac{f(r)}{h(r)}, \frac{g(r)}{h(r)})$ is a rational point of $ax^2+by^2=1$. Note: This argument fails if $|k|<\infty$, but the conclusion continues to hold even in this case, since $|k|<\infty$ $\Rightarrow$ the quadratic form $\langle a,b\rangle$ is $k$-universal.
\end{proof}

From the definition of $k_{a,b}$ it is clear that $1\in D_{k_{a,b}}\langle a,b\rangle$, so $(\frac{a,b}{k})$ splits over $k_{a,b}$. Of course, $1$ also splits over $k_{a,b}$ (since it splits over $k$). Conversely one has the following:

\begin{prop}[E. Witt] \label{Witt1} The only quaternion algebras defined over $k$ which split over $k_{a,b}$ are $(\frac{a,b}{k})$ and $1$.
\end{prop}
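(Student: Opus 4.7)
My plan uses Proposition \ref{prop_6}(3) to reformulate the splitting condition as a Pfister hyperbolicity condition: the quaternion algebra $\left(\frac{c,d}{k}\right)$ splits over $k_{a,b}$ iff $1 \in D_{k_{a,b}}\langle c,d\rangle$, equivalently the $2$-fold Pfister form $\pi_{c,d} := \langle 1,-c,-d,cd\rangle$ is hyperbolic over $k_{a,b}$. Since $2$-fold Pfister forms over $k$ are classified up to isometry by their associated quaternion Brauer class, the statement is equivalent to: if $\pi_{c,d}$ is hyperbolic over $k_{a,b}$, then either $\pi_{c,d}$ is hyperbolic over $k$, or $\pi_{c,d} \simeq \pi_{a,b}$ over $k$.

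I would then exploit the presentation $k_{a,b} = k(x)(\sqrt{u})$ with $u = (1-ax^2)/b$, already used in the proof of Proposition \ref{trivial}, to factor the extension as $k \hookrightarrow k(x) \hookrightarrow k(x)(\sqrt{u})$. The restriction $\operatorname{Br}(k)[2] \to \operatorname{Br}(k(x))[2]$ is injective via specialization at $x=0$, so it suffices to track which classes of $\operatorname{Br}(k(x))[2]$ coming from $k$ lie in the kernel of the further restriction $\operatorname{Br}(k(x))[2] \to \operatorname{Br}(k_{a,b})[2]$. By the standard description of the kernel of the Brauer restriction along a quadratic extension $F(\sqrt{u})/F$, this kernel equals $\{0\} \cup \{[(\frac{u,e}{F})] : e \in F^*\}$. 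Applied with $F = k(x)$, I obtain $e \in k(x)^*$ with $(\frac{c,d}{k(x)}) = (\frac{u,e}{k(x)})$ in $\operatorname{Br}(k(x))$.

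To descend this identity back to $k$, I would use the defining relation $ax^2 + bu = 1$, which exhibits $bu$ as a norm from $k(x)(\sqrt{a})$ and therefore yields the identity $(\frac{a,b}{k(x)}) = (\frac{a,u}{k(x)})$. A Faddeev residue analysis on $\mathbb{P}^1_k$ then controls $e$: the left-hand side $(\frac{c,d}{k(x)})$ descends from $k$ and so is unramified at every closed point, while $(\frac{u,e}{k(x)})$ can be ramified only along the divisor of $u$, i.e.\ at the closed point $P_0$ cut out by $1-ax^2$ and at $\infty$. Matching residues at $P_0$ and $\infty$ constrains the class of $e$ in $k(x)^*/k(x)^{*2}$ to be a product of a constant and a power of $1-ax^2$; combined with $(\frac{a,u}{k(x)}) = (\frac{a,b}{k(x)})$, this forces $(\frac{c,d}{k}) \in \{1, (\frac{a,b}{k})\}$.

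The main obstacle is the final step, the residue bookkeeping concentrated at $P_0$ and $\infty$. A slicker alternative is to appeal directly to the Cassels--Pfister/Arason theorem on Pfister forms over function fields of Pfister neighbors: since the conic $ax^2 + by^2 = 1$ is projectively the vanishing locus of the ternary form $\langle a, b, -1\rangle$, a Pfister neighbor of $\pi_{a,b}$, any $2$-fold Pfister form hyperbolic over $k_{a,b}$ is either hyperbolic over $k$ or similar (hence isometric, being Pfister) to $\pi_{a,b}$ over $k$, which yields the conclusion at once.
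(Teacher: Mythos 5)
The paper does not actually prove this proposition; it simply cites Witt's original Satz and Knebusch's Lemma 4.4, so any self-contained argument you supply is by default a different route. Your second, preferred argument is correct and complete: reduce via Proposition \ref{prop_6} to the claim that a $2$-fold Pfister form $\pi_{c,d}$ which becomes hyperbolic over $k_{a,b}=k(\langle a,b,-1\rangle)$ is either hyperbolic over $k$ or isometric to $\pi_{a,b}$, then apply the subform theorem for Pfister forms over function fields of quadrics. Indeed, $\langle a,b,-1\rangle$ being similar to a subform of the anisotropic $\pi_{c,d}$, together with roundness of Pfister forms and the triviality of the discriminant of a $2$-fold Pfister form, forces $\pi_{c,d}\simeq\langle 1,-a,-b,ab\rangle=\pi_{a,b}$; the isotropic case is exactly the purely transcendental case $k_{a,b}=k(z)$, where $\operatorname{Br}(k)\hookrightarrow\operatorname{Br}(k(z))$ shows only the split algebra dies. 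This is in substance the generic splitting argument of Knebusch that the paper cites, so you have reconstructed the intended proof rather than found a new one. Your first sketch, via the kernel of $\operatorname{Br}(k(x))\to\operatorname{Br}(k(x)(\sqrt{u}))$ and Faddeev residues on $\mathbb{P}^1_k$, could be pushed through but is, as you acknowledge, incomplete at the residue bookkeeping: the identity $(\frac{c,d}{k(x)})=(\frac{u,e}{k(x)})$ only determines $e$ modulo norms from $k(x)(\sqrt{u})$, and one must still treat the point at infinity (where $v_\infty(u)=-2$ is even, so ramification depends on $e$) and distinguish the cases where $1-ax^2$ is irreducible or splits according to whether $a\in k^{*2}$. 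I would drop that sketch and keep only the subform-theorem argument, which stands on its own.
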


\begin{proof} See \cite[Satz, Page 465]{Witt} or \cite[Lemma 4.4]{k}.
\end{proof}

We write $K \cong_{k} L$ to indicate that the field extensions $K,L$ of $k$ are $k$-isomorphic.

\begin{prop}[E. Witt] \label{Witt2} The following are equivalent:

(1) $(\frac{a,b}{k}) = (\frac{c,d}{k})$.

(2) $k_{a,b} \cong_{k} k_{c,d}$.
\end{prop}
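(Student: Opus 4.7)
The plan is to handle the two implications separately, leaning on the preceding two propositions for the bulk of the argument.

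For $(2) \Rightarrow (1)$, suppose $\varphi : k_{a,b} \to k_{c,d}$ is a $k$-isomorphism. From the definition of $k_{a,b}$ one has $1 \in D_{k_{a,b}}\langle a,b\rangle$, so by Proposition \ref{prop_6} the algebra $(\frac{a,b}{k})$ splits over $k_{a,b}$, and hence it splits over $k_{c,d}$ via $\varphi$. Proposition \ref{Witt1} then forces $(\frac{a,b}{k}) \in \{1,\,(\frac{c,d}{k})\}$. In the borderline case $(\frac{a,b}{k})=1$, Proposition \ref{prop_6}(5) makes $k_{a,b}$ purely transcendental over $k$; transferring along $\varphi$, the same is true for $k_{c,d}$, and Proposition \ref{prop_6} applied to $(c,d)$ then yields $(\frac{c,d}{k}) = 1$ as well.

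For $(1) \Rightarrow (2)$, I would split into two cases. When the common Brauer class is trivial, Proposition \ref{prop_6}(5) makes both $k_{a,b}$ and $k_{c,d}$ rational function fields of transcendence degree one over $k$, so they are clearly $k$-isomorphic. When $(\frac{a,b}{k})=(\frac{c,d}{k}) \neq 1$, both affine conics are anisotropic, and the strategy is to invoke the classical correspondence between Brauer classes of quaternion algebras and isomorphism classes of smooth projective conics over $k$: equality of Brauer classes yields an isomorphism of the projective conics $aX^2+bY^2=Z^2$ and $cX^2+dY^2=Z^2$ as $k$-varieties, which immediately induces a $k$-isomorphism of their function fields $k_{a,b}$ and $k_{c,d}$.

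The main obstacle is the anisotropic case of $(1)\Rightarrow(2)$, since this is where one actually needs the conic-quaternion dictionary rather than just the quadratic form theory already summarized. A self-contained route would proceed by first extracting from $(\frac{a,b}{k}) \cong (\frac{c,d}{k})$ an isometry of pure-quaternion subspaces, equivalently $\langle a,b,-ab\rangle \cong \langle c,d,-cd\rangle$, and then using the resulting matrix to build an explicit linear equivalence between the two ternary forms $\langle a,b,-1\rangle$ and $\langle c,d,-1\rangle$ up to a similarity factor, which translates into a birational $k$-map between the conics. Since this computation is already worked out in Witt's original paper \cite{Witt} which is cited for Proposition \ref{Witt1}, the cleanest plan is simply to refer to that reference for the explicit construction.
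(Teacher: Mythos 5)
Your proposal is correct and follows essentially the same route as the paper: the implication (2) $\Rightarrow$ (1) is deduced from Proposition \ref{Witt1} (your extra care with the degenerate case $(\frac{a,b}{k})=1$, handled via Proposition \ref{prop_6}(5), is a valid and welcome detail), while (1) $\Rightarrow$ (2) is ultimately referred back to Witt's Satz on page 464 of \cite{Witt}, exactly as the paper does. Your sketch of the conic--quaternion dictionary via the similarity $\langle a,b,-ab\rangle \cong ab\langle a,b,-1\rangle$ is a sound outline of what that reference contains, so no gap remains.
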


\begin{proof} Then implication (1) $\Rightarrow$ (2) is \cite[Satz, page 464]{Witt}. The implication (2) $\Rightarrow$ (1) is immediate from the Proposition \ref{Witt1}.
\end{proof}


We will need to know which orderings of $k$ extend to $k_{a,b}$.

\begin{lemma}\label{order extension} An ordering $<$ on $k$ extends to an ordering on $k_{a,b}$ iff at least one of $a,b$ is positive at $<$.
\end{lemma}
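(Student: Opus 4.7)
The plan is to handle the two directions separately. For the forward direction I would argue by contradiction: suppose both $a<0$ and $b<0$ at $<$, and let $\prec$ be any ordering of $k_{a,b}$ extending $<$. Reading the defining relation $ax^2+by^2=1$ inside $k_{a,b}$ and using that squares are non-negative in any ordering, both $ax^2$ and $by^2$ are $\preceq 0$, forcing $1=ax^2+by^2\preceq 0$, which is absurd. Hence at least one of $a,b$ is positive at $<$.

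For the reverse direction, by the symmetry that swaps $x\leftrightarrow y$ and $a\leftrightarrow b$ (which sends the defining polynomial to itself up to relabelling and so preserves $k_{a,b}$ up to $k$-isomorphism), I may assume $a>0$. I would then use the presentation $k_{a,b}=k(x)\bigl(\sqrt{(1-ax^2)/b}\bigr)$ recorded in the proof of Proposition \ref{trivial}, together with the classical fact that an ordering on a field $F$ (of characteristic $\ne 2$) extends to $F(\sqrt{c})$ if and only if $c$ is positive in that ordering. Thus the task reduces to exhibiting an ordering on the rational function field $k(x)$ extending $<$ in which the element $(1-ax^2)/b$ is positive; the desired ordering on $k_{a,b}$ is then obtained by adjoining a positive square root.

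To produce such an ordering on $k(x)$ I would work inside $R(t)$, where $R$ is the real closure of $(k,<)$ and $t$ is transcendental over $R$, and use the classical fact that $R(t)$ admits orderings extending the unique ordering of $R$ in which $t$ is positive infinitesimal, and also in which $t$ is positive infinite. The $k$-embedding $k(x)\hookrightarrow R(t)$ defined by $x\mapsto t$ pulls back such an ordering to an ordering of $k(x)$ extending $<$. If $b>0$, take $t$ positive infinitesimal; then $ax^2$ is positive infinitesimal, so $1-ax^2>0$ and hence $(1-ax^2)/b>0$. If $b<0$, take $t$ positive infinite; then $ax^2$ is positive infinite, so $1-ax^2<0$, and dividing by the negative $b$ again yields positivity.

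Once the framework is in place the computations are trivial sign checks, so the only point that requires care is the background fact that the two prescribed orderings on $R(t)$ exist and restrict to orderings of $k(x)$ extending $<$; this is standard real algebra (it amounts to realizing the Dedekind cuts at $0^+$ and $+\infty$ on $R$ via an ordering on the transcendental extension $R(t)$). I therefore do not anticipate a substantive obstacle; the lemma is really a direct application of the extension theory of orderings to the explicit quadratic presentation of $k_{a,b}$.
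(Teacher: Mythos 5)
Your proof is correct, and it reaches the conclusion by a somewhat different mechanism than the paper. The forward direction is identical (read $ax^2+by^2=1$ in an extending ordering and use that squares are non-negative). For the converse, the paper finds a point of the conic over the real closure $R$ of $(k,<)$, concludes via Proposition \ref{prop_6} that $R_{a,b}\cong R(t)$, embeds $k_{a,b}\hookrightarrow R(t)$, and then observes that \emph{any} ordering of $R(t)$ restricts to one of $k_{a,b}$ extending $<$. You instead never embed $k_{a,b}$ itself into $R(t)$: you present $k_{a,b}$ as the quadratic extension $k(x)(\sqrt{(1-ax^2)/b})$, invoke the Artin--Schreier criterion for extending an ordering across $F(\sqrt{c})$, and manufacture a suitable ordering of $k(x)$ by pulling back the $0^+$ or $+\infty$ ordering of $R(t)$ according to the sign of $b$ (after the harmless reduction to $a>0$ by the $x\leftrightarrow y$, $a\leftrightarrow b$ symmetry). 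Both arguments ultimately rest on the orderings of $R(t)$; the paper's version is shorter because it outsources the work to the splitting of $(\frac{a,b}{k})$ over $R$, while yours avoids the quaternion-algebra detour at the cost of an explicit sign computation and the square-root extension criterion. The sign checks are right in both cases ($at^2$ infinitesimal when $t$ is, infinite when $t$ is), and the only degenerate possibility --- $(1-ax^2)/b$ a square in $k(x)$ --- is harmless for the direction of the criterion you use, so there is no gap.
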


\begin{proof} One way is clear. If $<$ extends to $k_{a,b}$ then, in $k_{a,b}$, $ax^2+by^2=1>0$ so at least one of $a,b$ must be positive. Conversely, suppose at least one if $a,b$ is positive. Fix a real closure $R$ of $(k,<)$. Clearly $\exists$ $\overline{x}, \overline{y} \in R$ satisfying $a\overline{x}^2+b\overline{y}^2=1$. Then $(\frac{a,b}{k})$ splits over $k(\overline{x},\overline{y})$ and hence also over $R$, so $k_{a,b} \hookrightarrow R_{a,b} \equiv R(t)$. Any one of the infinitely many orderings of $R(t)$ extends the ordering $<$.
\end{proof}

\section{application to function fields of conics}

Let $K$ be a function field in one variable (i.e. a function field $K$ over $k$ satisfying $\operatorname{trdeg}(K:k)=1$) over a global field $k$. (We do not assume that $k$ is the field of constants of $K$ over $k$.) 
We consider the set $\nu_K$ consisting of all non-trivial Abhyankar valuations of $K$ over $k$ such that $K_v$ is not finite of characteristic $2$. 
Thus $$\nu_K = \nu_{K,0}\cup \nu_{K,1} \cup \nu_{K,2} \cup \nu_{K,3} \cup \nu_{K,4} \text{ (disjoint union)}$$ where
$\nu_{K,0}$ is the set of 
valuations $v$ of $K$ such that $\Gamma_v = \mathbb{Z}$ and $K_v$ is a number field, $\nu_{K,1}$ is the set of valuations $v$ on $K$ such that $\Gamma_v =\mathbb{Z}$ and $K_v$ is a global field of characteristic $p\ne 0,2$, $\nu_{K,2}$ is the set of valuations $v$ on $K$ such that $\Gamma_v=\mathbb{Z}$ and $K_v$ is a global field of characteristic $2$, $\nu_{K,3}$ is the set of valuations $v$ on $K$ such that $\Gamma_v = \mathbb{Z} \times \mathbb{Z}$, $K_v$ is a finite field of characteristic $\ne 2$ and $-1 \notin K_v^{*2}$ and $\nu_{K,4}$ is the set of valuations $v$ on $K$ such that $\Gamma_v = \mathbb{Z} \times \mathbb{Z}$, $K_v$ is a finite field of characteristic $\ne 2$ and $-1 \in K_v^{*2}$.

Of course, some of the sets $\nu_{K,i}$ may be empty. Specifically, if $\operatorname{char}(K) \notin \{ 0,2\}$ then $\nu_{K,i} = \emptyset$ for $i \in \{ 0,2\}$ and if $\operatorname{char}(K) = 2$ then  $\nu_{K,i} = \emptyset$ for $i \in \{ 0,1,3, 4\}$.

We will need the following two results from \cite{gm2016}:

\begin{cor}[{\cite[Corollary 8.1]{gm2016}}] \label{one variable} Suppose $K$, $L$ are function fields in one variable over global fields which are Witt equivalent via a hyperfield isomorphism $\alpha : Q(K) \rightarrow Q(L)$. Then for each $i\in \{ 0,1,2,3,4\}$ there is a uniquely defined bijection  between $\nu_{K,i}$ and $\nu_{L,i}$ such that, if $v \leftrightarrow w$ under this bijection, then $\alpha$ maps $(1+M_v)K^{*2}/K^{*2}$ onto $(1+M_w)L^{*2}/L^{*2}$ and $U_vK^{*2}/K^{*2}$ onto $U_wL^{*2}/L^{*2}$ for $i \in \{0,1,2,3\}$ and such that $\alpha$ maps $(1+M_v)K^{*2}/K^{*2}$ onto $(1+M_w)L^{*2}/L^{*2}$ for $i=4$.
\end{cor}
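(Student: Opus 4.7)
The plan is to apply Theorem \ref{main lemma} one valuation at a time. Given $v\in \nu_{K,i}$, since $v$ is Abhyankar over the global field $k$ with $\operatorname{trdeg}(K:k)=1$, the Abhyankar equality forces $\Gamma_v$ to be finitely generated — in fact isomorphic to $\mathbb{Z}$ or $\mathbb{Z}\times\mathbb{Z}$ in the five listed cases — so the hypothesis on $\Gamma_v$ in Theorem \ref{main lemma} is automatic. The first task is to verify one of the two basic-part conditions (i) or (ii). Using the identifications recalled in Section 2, namely $B((1+M_v)K^{*2})/(1+M_v)K^{*2}\cong B(K_v^{*2})/K_v^{*2}$ together with the equivalence of exceptionality of $(1+M_v)K^{*2}$ and of $K_v^{*2}$, this reduces to a computation on the residue field $K_v$.

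I would then carry out this verification case by case. For $i\in\{0,1\}$, $K_v$ is a global field of characteristic $\ne 2$; in this setting every nonzero element is $K_v^{*2}$-basic and $K_v^{*2}$ is unexceptional, so condition (i) applies. For $i=3$, $K_v$ is finite of odd characteristic with $-1\notin K_v^{*2}$, so again (i) holds. For $i=4$, $-1\in K_v^{*2}$ forces $K_v^{*2}$ to be exceptional, so (i) fails and one falls back on (ii) via the index count $[U_vK^{*2}:(1+M_v)K^{*2}]=[K_v^*:K_v^{*2}]=2$. For $i=2$, $K_v$ has characteristic $2$ and a separate analysis is needed, but condition (ii) again holds.

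Feeding Theorem \ref{main lemma} then produces a valuation $w$ on $L$ with $\alpha\bigl((1+M_v)K^{*2}/K^{*2}\bigr) = (1+M_w)L^{*2}/L^{*2}$, and for $i\in\{0,1,2,3\}$ also $\alpha\bigl(U_vK^{*2}/K^{*2}\bigr) = U_wL^{*2}/L^{*2}$. The commutative diagrams \eqref{d1}--\eqref{d3} then induce a hyperfield isomorphism $Q(K_v)\cong Q(L_w)$ for $i\ne 4$, and, in every case, a group isomorphism $\Gamma_v/2\Gamma_v\cong \Gamma_w/2\Gamma_w$. To certify $w\in \nu_{L,i}$ I would combine the two: the classification of Witt equivalence for global fields (Perlis--Szymiczek--Conner--Litherland) and for finite fields pins down the type of $L_w$ (global of characteristic $0$, global of characteristic $p\ne 2$, global of characteristic $2$, or finite of odd characteristic with prescribed square class of $-1$), while the Abhyankar identity on the $L$-side together with $\Gamma_v/2\Gamma_v\cong \Gamma_w/2\Gamma_w$ fixes the $\mathbb{Q}$-rank and, in the rank-one cases, the structure of $\Gamma_w$.

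Uniqueness of $w$ is built in: among non-trivial valuations of $L$, the subgroup $(1+M_w)L^{*2}$ determines $w$, and this subgroup is the prescribed $\alpha$-image. The inverse map is obtained by repeating the argument with $\alpha^{-1}$ in place of $\alpha$, giving the required bijection. The main obstacle, as I see it, is the verification of the basic-part hypothesis in cases $i=2$ and $i=4$: residue characteristic $2$ requires a separate treatment because of the way $1+M_v$ interacts with $K^{*2}$, and $i=4$ is precisely the exceptional case in which clause (i) is unavailable and only clause (ii) can be invoked — which is exactly why the corollary asserts the $U_v$-level correspondence only for $i\in\{0,1,2,3\}$ and the weaker $(1+M_v)$-level correspondence for $i=4$.
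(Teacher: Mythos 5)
The paper does not actually prove this corollary in-text; it is imported verbatim from \cite[Corollary 8.1]{gm2016}, so there is no proof here to compare against line by line. That said, your overall strategy --- apply Theorem \ref{main lemma} to each $v\in\nu_{K,i}$ after verifying hypothesis (i) or (ii) on the residue field via the isomorphism $B((1+M_v)K^{*2})/(1+M_v)K^{*2}\cong B(K_v^{*2})/K_v^{*2}$ and the transfer of exceptionality, then read off the correspondences from diagrams \eqref{d1}--\eqref{d3}, and obtain bijectivity by running the argument with $\alpha^{-1}$ --- is exactly what the machinery of Section 2 is set up to deliver, and your treatment of the cases $i\in\{0,1,3,4\}$ is correct.

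However, your case $i=2$ contains a genuine error, and it is not cosmetic. You assert that condition (ii) holds there, but (ii) requires $(1+M_v)K^{*2}$ to have index $2$ in $U_vK^{*2}$, i.e.\ $[K_v^*:K_v^{*2}]=2$; for $K_v$ a global field of characteristic $2$ the square class group is infinite (for $\mathbb{F}_2(t)$, for instance, $K_v^*/K_v^{*2}$ is freely generated by the classes of the irreducible polynomials), so (ii) can never hold. Worse, if only (ii) held, Theorem \ref{main lemma} would deliver only the $(1+M_v)$-level correspondence, whereas the corollary claims the $U_v$-level correspondence precisely for $i\in\{0,1,2,3\}$, including $i=2$. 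What actually happens is that condition (i) holds: since $[K_v:K_v^2]=2$ in characteristic $2$, for any $x\notin K_v^{*2}$ one has $K_v^2+xK_v^2=K_v$, so $K_v^{*2}+K_v^{*2}x$ meets every one of the infinitely many square classes and $x$ is not rigid; hence $B(K_v^{*2})=K_v^*$ and $K_v^{*2}$ is unexceptional, just as in cases $i=0,1$. With that correction --- and with a bit more care in certifying that the valuation $w$ produced by Theorem \ref{main lemma} really lies in $\nu_{L,i}$, where you need the Abhyankar inequality on the $L$-side combined with the inequality $(L^*:U_wL^{*2})\ge(K^*:U_vK^{*2})$ to pin down $\Gamma_w$ itself rather than merely $\Gamma_w/2\Gamma_w$ --- your argument goes through.
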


\begin{cor}[{\cite[Corollary 8.2]{gm2016}}] \label{rational points} Let $K \sim L$ be function fields in one variable over global fields $k,\ell$ respectively,  with fields of constants $k$ and $\ell$ respectively. Then $k\sim \ell$ except possibly in the case where $k,\ell$ are both number fields.  In the latter case if there exists $v \in \nu_{K,0}$ with $K_v =k$ and $w \in \nu_{L,0}$ with $L_w= \ell$. Then $k \sim \ell$.
\end{cor}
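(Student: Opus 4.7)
The plan is to apply Corollary~\ref{one variable} to the given hyperfield isomorphism $\alpha : Q(K) \to Q(L)$. That corollary produces type-preserving bijections $\nu_{K,i} \leftrightarrow \nu_{L,i}$ for $i = 0, \ldots, 4$, and for $i \in \{0,1,2,3\}$ it additionally matches $U_vK^{*2}/K^{*2}$ with $U_wL^{*2}/L^{*2}$. Consequently, by diagram~\eqref{d2}, each matched pair $v \leftrightarrow w$ with $i \le 3$ yields an induced hyperfield isomorphism $Q(K_v) \cong Q(L_w)$, so $K_v \sim L_w$. The strategy is to exhibit a valuation $v$ for which $K_v$ realizes the Witt class of $k$, transport this through the bijection, and identify the image residue field with $\ell$ up to Witt equivalence.

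When $k,\ell$ are not both number fields, the characteristics of $K$ and $L$ must in fact coincide: the type-preservation in Corollary~\ref{one variable} forces, e.g., $\nu_{K,0} \ne \emptyset \Leftrightarrow \nu_{L,0} \ne \emptyset$, and the characteristic of $K$ can be read off from which of the $\nu_{K,i}$ are nonempty. So $k,\ell$ are global function fields over finite fields of the same positive characteristic. Here I would work with valuations in $\nu_{K,1} \cup \nu_{K,2}$ trivial on $k$: their residue fields are finite extensions of $k$ arising from places of the curve $K/k$, and by a judicious choice (using that $k$ is intrinsically characterized inside $K$ as the field of constants) one obtains $K_v$ of the same Witt class as $k$. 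The matched $w$ then satisfies $L_w \sim k$, and the preservation of $\Gamma_v/2\Gamma_v \cong \Gamma_w/2\Gamma_w$ from diagram~\eqref{d3} together with the symmetric structural characterization of $\ell$ inside $L$ forces $L_w \sim \ell$.

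For the number-field case one feeds the hypothesized $v$ into the bijection to obtain $w^\star \in \nu_{L,0}$ with $k = K_v \sim L_{w^\star}$, where $L_{w^\star}$ is a number-field extension of $\ell$; symmetrically $w$ produces $v^\star \in \nu_{K,0}$ with $\ell = L_w \sim K_{v^\star}$ and $k \subseteq K_{v^\star}$. Comparing the two parallel Witt equivalences together with the extension inclusions, and invoking the classical Witt invariants of number fields (level, numbers of real and dyadic places, $(F^*:F^{*2})$), one collapses both towers to conclude $L_{w^\star} \sim \ell$ and $K_{v^\star} \sim k$, hence $k \sim \ell$. The main obstacle is precisely this collapse: \emph{a priori} the bijection of Corollary~\ref{one variable} need not send $k$-rational-point valuations to $\ell$-rational-point valuations, and it is exactly the joint existence hypothesis on $v$ and $w$ that supplies enough symmetric data for the matched residue fields to be pinned down as Witt-equivalent to the respective constant fields rather than to some proper finite extension.
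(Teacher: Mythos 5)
First, a point of reference: the paper does not actually prove this corollary --- it is imported verbatim from \cite[Corollary 8.2]{gm2016} --- so your proposal can only be measured against the argument the surrounding text evidently has in mind (the same mechanism is spelled out in the proof of Theorem \ref{genus zero case}(4)). Your treatment of the number-field case is essentially that argument and is sound: for $v\in\nu_{K,0}$ the Abhyankar equality forces $v$ to be trivial on $k$ with $K_v$ a finite extension of $k$; Corollary \ref{one variable} together with diagram \eqref{d2} gives $K_v\sim L_{w^\star}$ with $L_{w^\star}\supseteq\ell$ finite; and the one invariant you actually need --- which you should name explicitly, since it is not on your list --- is that Witt equivalent number fields have equal degree over $\mathbb{Q}$. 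That yields $[k:\mathbb{Q}]=[L_{w^\star}:\mathbb{Q}]\ge[\ell:\mathbb{Q}]$ and, symmetrically, $[\ell:\mathbb{Q}]\ge[k:\mathbb{Q}]$, whence $L_{w^\star}=\ell$ and $k\sim\ell$.

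The positive-characteristic half is where the genuine gaps lie. (i) Non-emptiness of the sets $\nu_{K,i}$ only detects the trichotomy $\operatorname{char}=0$, $=2$, or odd; it does not give ``the same positive characteristic,'' although nothing in the proof needs that. (ii) The ``judicious choice'' of $v$ with $K_v\sim k$ is unjustified and is precisely the crux: a place of $K$ trivial on $k$ has residue field a finite extension $k'$ of $k$, and $k'\sim k$ can fail (a quadratic extension may adjoin $\sqrt{-1}$ and change the level from $2$ to $1$); moreover $K/k$ need not possess a degree-one place, so you cannot simply arrange $K_v=k$. (iii) ``The symmetric structural characterization of $\ell$ inside $L$ forces $L_w\sim\ell$'' is an assertion, not an argument, and \eqref{d3} contributes nothing here since both value groups are $\mathbb{Z}$. (iv) Characteristic $2$ is not addressed at all. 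For odd characteristic the valuation-theoretic detour is in fact unnecessary: the Witt class of a global field of characteristic $p\ne 0,2$ is determined by its level $s\in\{1,2\}$, i.e.\ by whether $-1$ is a square; since $k$ is the field of constants of $K$ one has $-1\in k^{*2}$ iff $-1\in K^{*2}$, and any hyperfield isomorphism $Q(K)\to Q(L)$ fixes $-\overline{1}$, so $-1\in K^{*2}$ iff $-1\in L^{*2}$; hence $k$ and $\ell$ have the same level and $k\sim\ell$. The characteristic-$2$ case needs its own discussion and cannot be carried by the same words.
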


\begin{rem} \label{rational points remark} The last assertion of Corollary \ref{rational points} applies, in particular, in the case where $K=k(x)$, $L=\ell(x)$, where $k,\ell$ are number fields. \end{rem}

Let $k$ be a number field. Every ordering of $k$ is archimedean, so that it corresponds to a real embedding $k \hookrightarrow \mathbb{R}$.
Denote by $r_1$ the number of real embeddings of $k$ and by $r_2$ the number of conjugate pairs of complex embeddings of $k$. Then $[k:\mathbb{Q}] = r_1+2r_2$. Furthermore, let 
$$V_k := \{ r\in k^* : (r) = \frak{a}^2 \text{ for some fractional ideal } \frak{a} \text{ of } k \}.$$ 
$V_k$  is a subgroup of $k^*$ and $k^{*2} \subseteq V_k$.  We will need the following result, which is a version of \cite[Theorem 8.6]{gm2016}:

\begin{theorem} \label{genus zero case} Suppose $K$ and  $L$ are function fields of genus zero curves over number fields with fields of constants $k$ and $\ell$ respectively, and $\alpha : Q(K) \rightarrow Q(L)$ is a hyperfield isomorphism. Then

(1) $r \in k^*/k^{*2}$ iff $\alpha(r) \in \ell^*/\ell^{*2}$.

(2) $\alpha$ induces a bijection  between orderings $P$ of $k$ which extend to $K$ and orderings $Q$ of $\ell$ which extend to $L$ via $P \leftrightarrow Q$ iff $\alpha$ maps $P^*/k^{*2}$ to $Q^*/\ell^{*2}$.

(3) $\alpha$ maps $V_k/k^{*2}$ to $V_{\ell}/\ell^{*2}$.

(4) $[k:\mathbb{Q}] = [\ell:\mathbb{Q}]$.

(5) $K$ is purely transcendental over $k$ iff $L$ is purely transcendental over $\ell$. In this case, the map $r \mapsto \alpha(r)$ defines a hyperfield isomorphism between $Q(k)$ and $Q(\ell)$, and the $2$-ranks of the ideal class groups of $k$ and $\ell$ are equal.
\end{theorem}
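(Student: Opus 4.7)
The plan is to apply Corollary \ref{one variable}, which provides bijections $v\leftrightarrow w$ between $\nu_{K,i}$ and $\nu_{L,i}$ compatible with $\alpha$ (preserving $U_vK^{*2}/K^{*2}$ for $i\in\{0,1,2,3\}$), and to interpret each of the five assertions in intrinsic valuation-theoretic terms that $\alpha$ visibly respects.

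For (1), observe that any $v\in\nu_{K,0}$ must be trivial on $k$: otherwise $k_{v|k}$ is a finite field, so $\operatorname{char}(K_v)>0$, contradicting $K_v$ being a number field. Hence $\nu_{K,0}$ corresponds to the closed points of the smooth projective model $C$ of $K/k$, and having $v(r)$ even for every such $v$ means $(r)=2D$ on $C$; since $\deg D=0$ and $\operatorname{Pic}^0(C)=0$ for the genus zero curve $C$, the divisor $D$ is principal, say $D=(s)$, giving $r/s^2\in k^*$. Combined with Proposition \ref{trivial} (which ensures $k^*/k^{*2}\hookrightarrow k^*K^{*2}/K^{*2}$ is a bijection) and the valuation-preservation part of Corollary \ref{one variable}, assertion (1) follows. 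For (2), orderings of a field $F$ biject with the sign characters of $Q(F)$ respecting the hyperfield addition, and $\alpha$ transports such characters from $K$ to $L$; (1) then makes restriction to $k^*/k^{*2}$ well-defined, producing the claimed bijection between orderings of $k$ extending to $K$ and orderings of $\ell$ extending to $L$.

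For (3), I would characterize $V_k/k^{*2}$ as the classes $\overline r\in k^*/k^{*2}$ for which $v_k(r)$ is even at every finite place $v_k$ of $k$. For each such $v_k$, a regular model of $K$ over $\mathcal{O}_{v_k}$ provides a valuation $w$ on $K$ coming from the generic point of the special fiber: this $w$ lies in $\nu_{K,1}\cup\nu_{K,2}$, is unramified over $v_k$, and satisfies $w(r)=v_k(r)$ for $r\in k^*$. Therefore $V_kK^{*2}/K^{*2}$ equals $k^*K^{*2}/K^{*2}\cap\bigcap_{w\in\nu_{K,1}\cup\nu_{K,2}}U_wK^{*2}/K^{*2}$, and applying Corollary \ref{one variable} for $i\in\{1,2\}$ together with (1) yields $\alpha(V_k/k^{*2})=V_\ell/\ell^{*2}$.

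For (4) and (5), the key identity is $[K_v:k][k:\mathbb{Q}]=[L_w:\ell][\ell:\mathbb{Q}]$ for each corresponding pair $v\leftrightarrow w$ in $\nu_{K,0}\leftrightarrow\nu_{L,0}$: the induced isomorphism $Q(K_v)\cong Q(L_w)$ makes $K_v$ and $L_w$ Witt equivalent number fields, and Witt equivalence of number fields preserves $[\,\cdot\,:\mathbb{Q}]$. By Springer's theorem, $\min\{[K_v:k]:v\in\nu_{K,0}\}$ equals $1$ or $2$ according as $K$ is purely transcendental over $k$ or not (and similarly for $L$), so in the symmetric cases the identity immediately yields $[k:\mathbb{Q}]=[\ell:\mathbb{Q}]$; in the purely transcendental case Springer further gives $D_K\langle r,s\rangle\cap k^*=D_k\langle r,s\rangle$ for $r,s\in k^*$, so $\alpha|_{k^*/k^{*2}}$ is a hyperfield isomorphism $Q(k)\to Q(\ell)$, and the equality of $2$-ranks of ideal class groups follows from (3) together with the standard formula $|V_k/k^{*2}|=2^{r_1(k)+r_2(k)+h_2(k)}$ and the equality $r_1(k)+r_2(k)=r_1(\ell)+r_2(\ell)$ coming from $k\sim\ell$. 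The main obstacle is ruling out the mixed case, where $K$ is purely transcendental but $L$ is a non-split conic (or vice versa); the plan here is to extract a contradiction from the mismatch between the hyperfield structure that $k^*/k^{*2}$ inherits from $Q(K)$ (which is exactly $Q(k)$ by Springer) and the structure $\ell^*/\ell^{*2}$ inherits from $Q(L)$ (which strictly contains $Q(\ell)$ due to additional isotropy relations governed by the Pfister form $\langle\langle -c,-d\rangle\rangle$ associated with the defining conic of $L$).
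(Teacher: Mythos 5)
Your treatment of (1), (2) and (3) is essentially the paper's argument in lightly different clothing: the paper characterizes $k^*K^{*2}/K^{*2}$ as $\bigcap_{v\in\nu_{K,0}}U_vK^{*2}/K^{*2}$ using that $\frac{k[x,y]}{(ax^2+by^2-1)}$ is a PID with unit group $k^*$ (rather than $\operatorname{Pic}^0(C)=0$ on the projective model), and for (3) it cites Lemma \ref{unramified extension} rather than re-deriving the unramified divisorial extension from a regular model; these are cosmetic differences and your versions are fine. Likewise your use of the degree identity $[K_v:\mathbb{Q}]=[L_w:\mathbb{Q}]$ for corresponding $v\leftrightarrow w$, of Springer's theorem to read off $\min[K_v:k]\in\{1,2\}$, and of Lemma \ref{2 rank} for the $2$-rank statement all match the paper.

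The genuine gap is exactly the point you flag as ``the main obstacle'': the mixed case, where $K$ is purely transcendental over $k$ but $L$ is a non-split conic over $\ell$. This case must be excluded to prove (4) as well as (5) (if it occurred, the minimal-degree identity would force $[k:\mathbb{Q}]=2[\ell:\mathbb{Q}]$), so your proof of (4) is incomplete, not just your proof of (5). Your proposed route --- that the hyperfield structure induced on $\ell^*/\ell^{*2}$ from $Q(L)$ ``strictly contains'' $Q(\ell)$ while the structure induced on $k^*/k^{*2}$ from $Q(K)$ is exactly $Q(k)$ --- does not by itself yield a contradiction: it only shows the induced structure is not $Q(\ell)$, not that it cannot be isomorphic to $Q(k)$ for the \emph{other} number field $k$. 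Indeed, by Proposition \ref{Witt1} the induced structure on $\ell^*/\ell^{*2}$ is $Q(\ell)$ with the single quaternion class $(\frac{c,d}{\ell})$ adjoined to the split class, and ruling out an abstract isomorphism of such a structure with a genuine $Q(k)$ is precisely the kind of question the paper leaves open (Question 1). The paper's actual argument avoids this entirely: it takes $v\leftrightarrow w$ realizing the minimal residue degrees and uses the commutative square $(*)$ whose vertical maps are $k^*/k^{*2}\to K_v^*/K_v^{*2}$ and $\ell^*/\ell^{*2}\to L_w^*/L_w^{*2}$. In the mixed case one vertical arrow is an isomorphism (residue field equals the constant field) while the other has nontrivial kernel (the class of $\delta$ with $K_v=k(\sqrt{\delta})$ dies), which is incompatible with both horizontal arrows being isomorphisms. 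You should replace your ``plan'' with this square-and-kernel argument (or an equivalent one) to close the proof.
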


First part the proof follows the same line of reasoning as the proof of \cite[Theorem 8.6]{gm2016}, but we shall provide it here for the sake of the completeness of the exposition. We will need two lemmas:

\begin{lemma}[{\cite[Lemma 8.4]{gm2016}}] \label{2 rank} The $2$-rank of the group $V_k/k^{*2}$ is $r_1+r_2+t$ where $t$ is the $2$-rank of the ideal class group of $k$.
\end{lemma}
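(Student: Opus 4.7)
The plan is to produce a short exact sequence of $\mathbb{F}_2$-vector spaces
\begin{linenomath}\[1 \to \mathcal{O}_k^*/\mathcal{O}_k^{*2} \to V_k/k^{*2} \to \operatorname{Cl}(k)[2] \to 1\]\end{linenomath}
and then add dimensions. First, I would construct a map $\phi : V_k \to \operatorname{Cl}(k)[2]$ by $\phi(r) := [\mathfrak{a}]$, where $\mathfrak{a}$ is the unique fractional ideal satisfying $(r) = \mathfrak{a}^2$. Uniqueness of $\mathfrak{a}$ follows from the fact that the group of fractional ideals of the Dedekind domain $\mathcal{O}_k$ is free abelian, hence torsion-free; and $[\mathfrak{a}] \in \operatorname{Cl}(k)[2]$ because $2[\mathfrak{a}] = [(r)] = 0$. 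Replacing $r$ by $rs^2$ replaces $\mathfrak{a}$ by $\mathfrak{a}(s)$, so $\phi$ descends to $\bar\phi : V_k/k^{*2} \to \operatorname{Cl}(k)[2]$. Surjectivity is immediate: given any $[\mathfrak{a}] \in \operatorname{Cl}(k)[2]$, the ideal $\mathfrak{a}^2$ is principal, and any generator $r$ of $\mathfrak{a}^2$ satisfies $\bar\phi(r) = [\mathfrak{a}]$.

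Next, I would identify $\ker \bar\phi$. If $\bar\phi(r) = 0$ then $\mathfrak{a} = (s)$ is principal and $(r) = (s^2)$, forcing $r = u s^2$ for some $u \in \mathcal{O}_k^*$; conversely every element of $\mathcal{O}_k^* \cdot k^{*2}$ lies in the kernel. Thus $\ker \bar\phi = \mathcal{O}_k^* \cdot k^{*2}/k^{*2}$, and this group is isomorphic to $\mathcal{O}_k^*/\mathcal{O}_k^{*2}$ because $\mathcal{O}_k^* \cap k^{*2} = \mathcal{O}_k^{*2}$: a unit $u = s^2$ with $s \in k^*$ forces both $s$ and $s^{-1}$ to be integral, so $s \in \mathcal{O}_k^*$. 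This assembles into the advertised short exact sequence.

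Finally, Dirichlet's unit theorem yields $\mathcal{O}_k^* \cong \mu_k \times \mathbb{Z}^{r_1+r_2-1}$, and since $-1 \in \mu_k$ the cyclic group $\mu_k$ has even order, so $\mu_k/\mu_k^2 \cong \mathbb{Z}/2\mathbb{Z}$. Hence $\dim_{\mathbb{F}_2}(\mathcal{O}_k^*/\mathcal{O}_k^{*2}) = 1 + (r_1+r_2-1) = r_1 + r_2$, and $\dim_{\mathbb{F}_2} \operatorname{Cl}(k)[2] = t$ by definition of $t$. Additivity of dimension along the short exact sequence then produces the claimed $2$-rank $r_1+r_2+t$ for $V_k/k^{*2}$. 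I do not expect a real obstacle here; the only delicate point is the uniqueness of $\mathfrak{a}$ in $(r) = \mathfrak{a}^2$, which reduces to torsion-freeness of the ideal group of a Dedekind domain.
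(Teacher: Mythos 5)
Your proof is correct, and the delicate points (uniqueness of $\mathfrak{a}$ via torsion-freeness of the ideal group, and $\mathcal{O}_k^* \cap k^{*2} = \mathcal{O}_k^{*2}$ via integral closedness) are handled properly. The paper itself gives no proof here, only a citation to \cite[Lemma 8.4]{gm2016}, but your exact sequence $1 \to \mathcal{O}_k^*/\mathcal{O}_k^{*2} \to V_k/k^{*2} \to \operatorname{Cl}(k)[2] \to 1$ combined with Dirichlet's unit theorem is the standard argument for this count and is essentially the cited proof.
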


\begin{lemma}[{\cite[Lemma 8.5]{gm2016}}] \label{unramified extension} Suppose 
$v$ is a discrete rank 1 valuation on $k$, and $a,b\in k^*$.  There exists an Abhyankar extension of $v$ to $k_{a,b}$ such that $v({k_{a,b}}^*) = v(k^*)$.
\end{lemma}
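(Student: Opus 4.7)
The plan is to exhibit an Abhyankar extension $w$ explicitly by first simplifying the pair $(a,b)$ modulo the equivalence $k_{a,b} \cong_k k_{a',b'}$ of Proposition \ref{Witt2}, and then producing $w$ as an extension of a Gauss valuation on the purely transcendental subfield $k(x) \subseteq k_{a,b}$.

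For the reduction, Proposition \ref{Witt2} allows replacing $(a, b)$ by any $(a', b')$ with $(\frac{a,b}{k}) = (\frac{a',b'}{k})$ in the Brauer group. Combining the identities $(\frac{ac^2, b}{k}) = (\frac{a,b}{k})$ (scaling by squares) and $(\frac{a,b}{k}) = (\frac{a, -ab}{k})$ (verified by sending $I \mapsto i$, $J \mapsto ij$ between the standard quaternion generators), we can alter $v(a)$ and $v(b)$ by even integers, and in the event that both valuations are odd we replace $b$ by $-ab$, so that $v(-ab) = v(a) + v(b)$ becomes even. After further scaling by squares and possibly swapping the two entries, we may thus assume $v(a) \geq 0$ and $v(b) = 0$.

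With this in hand, write $k_{a,b} = k(x)(z)$ where $z^2 = d := (1-ax^2)/b$, and let $v'$ denote the Gauss extension of $v$ to $k(x)$, i.e.\ $v'(\sum_i c_i x^i) = \min_i v(c_i)$. Then $\Gamma_{v'} = v(k^*)$ and $k(x)_{v'} = k_v(\bar{x})$, a simple transcendental extension of $k_v$. Since $v(1)=0$ is the minimum among the coefficients of $1 - ax^2$ (the other being $-a$ with $v(-a) \geq 0$), one has $v'(1 - ax^2) = 0$, hence $v'(d) = -v(b) = 0$ is even. The quadratic extension $k_{a,b}/k(x)$ is therefore unramified at $v'$: every extension $w$ of $v'$ to $k_{a,b}$ satisfies $\Gamma_w = \Gamma_{v'} = v(k^*)$, and its residue field is either $k_v(\bar{x})$ (when $\bar{d}$ is a square in $k_v(\bar{x})$, so that Hensel's lemma already produces $\sqrt{d}$ in the henselization of $(k(x),v')$) or the quadratic extension $k_v(\bar{x})(\sqrt{\bar{d}})$ (otherwise). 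In either case the residue field is a function field in one variable over $k_v$, so the Abhyankar equality
\[\operatorname{trdeg}(k_{a,b}:k) = 1 = 0 + 1 = \operatorname{rk}_{\mathbb{Q}}(\Gamma_w/\Gamma_v) + \operatorname{trdeg}((k_{a,b})_w : k_v)\]
holds, confirming that $w$ is Abhyankar with $w(k_{a,b}^*) = v(k^*)$.

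The Gauss extension computation is essentially mechanical; the main obstacle is the Brauer-group reduction, where the identity $(\frac{a,b}{k}) = (\frac{a,-ab}{k})$ (passed through Proposition \ref{Witt2}) is needed to eliminate the otherwise-awkward case $v(a) \equiv v(b) \equiv 1 \pmod 2$, in which a direct Gauss construction on $k(x)$ would produce a ramified rather than an unramified quadratic extension.
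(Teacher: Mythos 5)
Your reduction of $(a,b)$ via Proposition \ref{Witt2} and the identities $(\frac{ac^2,b}{k})=(\frac{a,b}{k})$, $(\frac{a,b}{k})=(\frac{a,-ab}{k})$ is correct, as is the computation $v'(d)=0$ for the Gauss valuation once $v(a)\ge 0$ and $v(b)=0$. (The paper itself gives no proof here --- the lemma is imported from \cite[Lemma 8.5]{gm2016} --- so I can only judge the argument on its own terms.) The gap is the sentence ``the quadratic extension $k_{a,b}/k(x)$ is therefore unramified at $v'$.'' That the square root of a $v'$-unit generates an unramified (or split) extension is a residue-characteristic-$\ne 2$ fact: it is exactly Hensel's lemma applied to $T^2-d$, whose derivative $2T$ vanishes modulo $M_{v'}$ when $\operatorname{char}(k_v)=2$. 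Concretely, take $k=\mathbb{Q}$, $v=v_2$, $a=b=-1$, so $d=-(1+x^2)=(1+x)^2u$ with $u=-1+2t$, $t=x/(1+x)^2$, a $v'$-unit whose residue is even a square in $\mathbb{F}_2(\bar x)$. For $\alpha=\sqrt{u}$ and any extension $w$ of $v'$ one has $(\alpha+1)^2=2(t+\alpha)$; since $\bar\alpha^2=\bar u=1$ forces $\bar\alpha=1\ne\bar t$ in characteristic $2$, the element $t+\alpha$ is a $w$-unit and $w(\alpha+1)=\tfrac12 w(2)\notin\Gamma_{v'}$. So every extension of the Gauss valuation to $\mathbb{Q}_{-1,-1}$ is ramified and your construction fails to give $w(k_{a,b}^*)=v(k^*)$ in this case.

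This dyadic case cannot be excluded: the lemma is applied in the proof of Theorem \ref{genus zero case}(3) to all $v\in\nu_{K,1}\cup\nu_{K,2}$ (the latter having residue characteristic $2$), and in Proposition \ref{prime matching} precisely to detect the prime $p=2$, where the relevant valuations lie in $\nu_{K,2}$. Your argument as written proves the lemma only for $\operatorname{char}(k_v)\ne 2$. To handle dyadic $v$ you need an additional idea --- e.g.\ a more careful choice of the rational parameter so that $d$ becomes a square times a unit close enough to $1$ (within $1+4A_{v'}$, say) for the Hensel argument to apply to the shifted polynomial, or, after passing to the henselization, a normal form $(\frac{u,\pi}{\hat k_v})$ of the local quaternion algebra with $\hat k_v(\sqrt u)/\hat k_v$ unramified, or a model-theoretic construction extracting the valuation from a reduced component of the special fibre of an integral model of the conic. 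The statement is true, but the step you call ``essentially mechanical'' is exactly where the hard (and needed) case lives.
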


We now proceed to the proof of Theorem \ref{genus zero case}.

\begin{proof} By our hypothesis, $K = k_{a,b}$ for some $a,b \in k^*$ and $L = \ell_{c,d}$ for some $c,d \in \ell^*$. If $r\in k^*/k^{*2}$, then $r \in U_vK^{*2}/K^{*2}$ for all $v\in \nu_{K,0}$. This implies $\alpha(r) \in U_wL^{*2}/L^{*2}$ for all $w\in \nu_{L,0}$. Since $\frac{\ell[x,y]}{(cx^2+dy^2-1)}$ is a principal ideal domain with unit group $\ell^*$ this implies, in turn, that $\alpha(r) \in \ell^*/\ell^{*2}$.  Thus $\alpha$ induces a group isomorphism between $k^*/k^{*2}$ and $\ell^*/\ell^{*2}$. This proves (1). Suppose $P$ is an ordering of $k$ which extends to an ordering $P_1$ of $K$. Since $\alpha$ is a hyperfield isomorphism, there exists a unique ordering $Q_1$ of $L$ such that $\alpha(P_1^*/K^{*2}) = Q_1^*/L^{*2}$. Denote by $Q$ the restriction of $Q_1$ to $\ell$. Clearly $\alpha(P^*/k^{*2}) = Q^*/\ell^{*2}$. This proves (2).
 Lemma \ref{unramified extension} implies that $$V_k/k^{*2} =\{ r\in k^*/k^{*2} : r \in U_vK^{*2}/K^{*2} \ \forall v \in \nu_{K,1}\cup \nu_{K,2}\},$$ so (3) is clear.  Observe that if $v \leftrightarrow w$, $v\in \nu_{K,0}$, $w \in \nu_{L,0}$, the diagram
$$(*) \left. \xymatrix{
K_v^*/K_v^{*2} \ar[r] &  L_w^*/L_w^{*2} \\
k^*/k^{*2} \ar[u] \ar[r] & \ell^*/\ell^{*2} \ar[u]
}\right.$$
is commutative. The vertical arrows are the maps induced by the field embeddings $k \hookrightarrow K_v$, $\ell \hookrightarrow L_w$. Since the top arrow in ($*$) defines a hyperfield isomorphism between $Q(K_v)$ and $Q(L_w)$ we know that $[K_v:\mathbb{Q}] = [L_w:\mathbb{Q}]$. Choose $v,w$ with $[K_v:\mathbb{Q}] = [L_w:\mathbb{Q}]$ minimal. If $(\frac{a,b}{k})$ splits over $k$ then $K_v=k$ and $[K_v:\mathbb{Q}] = [k:\mathbb{Q}]$. If $(\frac{a,b}{k})$ is non-split over $k$ then $(\frac{a,b}{k})$ splits over a quadratic extension, so $[K_v:k] = 2$, i.e., $[K_v:\mathbb{Q}] = 2[k:\mathbb{Q}]$. Thus we see that either $[k:\mathbb{Q}]= [\ell:\mathbb{Q}]$, $2[k:\mathbb{Q}] = [\ell: \mathbb{Q}]$, or $[k:\mathbb{Q}]=2[\ell:\mathbb{Q}]$. Suppose $2[k:\mathbb{Q}] = [\ell: \mathbb{Q}]$, i.e., $[K_v:\mathbb{Q}] =2$, and $L_w=\ell$. Then the left vertical arrow in ($*$) has a non-trivial kernel, but the right vertical arrow in ($*$) is an isomorphism, a contradiction. Thus $2[k:\mathbb{Q}] = [\ell: \mathbb{Q}]$ is impossible. A similar argument shows that $[k:\mathbb{Q}]=2[\ell:\mathbb{Q}]$ is impossible. This proves (4).  If $(\frac{a,b}{k})$ is non-split and $(\frac{c,d}{\ell})$ is split, then  $[K_v:k] = 2$ and $L_w=\ell$ so $2[k:\mathbb{Q}] = [\ell:\mathbb{Q}]$, contradicting what was proved in (4).
This proves the first statement of (5). If $(\frac{a,b}{k})$ and $(\frac{c,d}{\ell})$ are both split then $k=K_v$, $\ell = L_w$. In this case it follows from the commutativity of ($*$) and what was already proved that the map $r \mapsto \alpha(r)$ defines a hyperfield isomorphism between $Q(k)$ and $Q(\ell)$. Since it is well-known that $r_1$ and $r_2$ are invariant under Witt equivalence, the last assertion of (5) is immediate now, from (3) and Lemma \ref{2 rank}.  This completes the proof.

\end{proof}

\noindent
\bf Question 1: \rm Is it true that every hyperfield isomorphism $\alpha : Q(k_{a,b}) \rightarrow Q(\ell_{c,d})$ induces a hyperfield isomorphism between $Q(k)$ and $Q(\ell)$? 
I.e., is the hypothesis that $(\frac{a,b}{k})$ and $(\frac{c,d}{\ell})$ are split really necessary? 


\medskip

\noindent
\bf Question 2: \rm For a given number field $k$, are there  infinitely many Witt inequivalent fields of the form $k_{a,b}$, $a,b \in k^*$? All we are able to prove in this regard is the following:

\begin{theorem} Let $k$ be a number field, $r = $ the number of orderings of $k$, $w= $ the number of Witt inequivalent fields of the form $k_{a,b}$, $a,b \in k^*$. Then
$$w \ge \begin{cases} 2 \text{ if } -1 \in D_k\langle 1,1\rangle, \\ 3 \text{ if } -1 \notin D_k\langle 1,1\rangle, \ k \text{ is not formally real},\\ r+3 \text{ if } k \text{ is formally real} \end{cases}.$$
\end{theorem}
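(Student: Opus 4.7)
The plan is to build explicit lists of fields $k_{a,b}$ and separate them using three invariants of any hyperfield isomorphism $\alpha:Q(K)\to Q(L)$: (A) whether $K/k$ is purely transcendental, preserved by Theorem \ref{genus zero case}(5); (B) the number $|X_k(K)|$ of orderings of $k$ that extend to $K$, preserved by Theorem \ref{genus zero case}(2) and, by Lemma \ref{order extension}, equal to $\#\{P:a>_P 0 \text{ or } b>_P 0\}$ when $K=k_{a,b}$; and (C) whether $-\overline{1}\in\overline{1}+\overline{1}$ in $Q(K)$, i.e.\ $-1\in D_K\langle 1,1\rangle$, preserved because $\alpha$ fixes both $-\overline{1}$ and $\overline{1}+\overline{1}$. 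Two supporting computations are used throughout: for $K=k(t)$, Cassels--Pfister gives $D_K\langle 1,1\rangle\cap k^*=D_k\langle 1,1\rangle$; and for $K=k_{a,b}$, Proposition \ref{Witt1} gives $-1\in D_K\langle 1,1\rangle$ iff the quaternion algebra $(\frac{-1,-1}{k})$ lies in $\{1,(\frac{a,b}{k})\}$.

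For Case 1 I would take $k(t)=k_{1,1}$ and any $k_{a,b}$ with $(\frac{a,b}{k})\ne 1$; such $(a,b)$ exists because every number field admits non-split quaternion algebras. These are separated by (A), so $w\ge 2$. For Case 2 I would take $k(t)$, $k_{-1,-1}$, and $k_{c,d}$ for any $(c,d)$ with $(\frac{c,d}{k})\notin\{1,(\frac{-1,-1}{k})\}$ (available since $\operatorname{Br}_2(k)$ is infinite and every $2$-torsion class is a quaternion class over a number field). Their $(A,C)$-types are $(\mathrm T,\mathrm F)$, $(\mathrm F,\mathrm T)$, $(\mathrm F,\mathrm F)$: the $\mathrm T$ for $k_{-1,-1}$ is the defining relation $x^2+y^2=-1$; the $\mathrm F$ for $k_{c,d}$ follows from the second supporting computation once one notes $(\frac{-1,-1}{k})\ne 1$ (since $s(k)=4$); and the $\mathrm F$ for $k(t)$ follows from the first supporting computation and the Case 2 hypothesis. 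Thus $w\ge 3$.

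For Case 3, let $P_1,\dots,P_r$ be the orderings of $k$ and take the $r+3$ fields $k(t)$; $k_{-1,-1}$; $k_{-1,-c}$ for some totally positive $c\in k^*$ with $c\notin D_k\langle 1,1\rangle$; and $k_{-1,a_j}$ for $j=1,\dots,r$, with $a_j$ positive at exactly $j$ orderings (and $a_r\notin D_k\langle 1,1\rangle$ as well). Existence of the $a_j$ comes from weak approximation at the real places, and existence of $c$ uses that in any formally real number field $D_k\langle 1,1\rangle$ is a proper subgroup of the totally positive elements (which follows from Hasse--Minkowski together with the existence of a non-archimedean place of $k$ where $-1$ is not a local square, guaranteed for instance by Chebotarev applied to $k(\sqrt{-1})/k$). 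The $(A,|X_k(K)|,C)$-triples come out as
\[
(\mathrm T,r,\mathrm F),\ (\mathrm F,0,\mathrm T),\ (\mathrm F,0,\mathrm F),\ (\mathrm F,1,\mathrm F),\ \ldots,\ (\mathrm F,r,\mathrm F),
\]
and are pairwise distinct, giving $w\ge r+3$.

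The main verification is the $(C)$ column: for each $K=k_{-1,b}$ on the list, the second supporting computation together with $(\frac{-1,-1}{k})\ne 1$ (valid in Cases 2 and 3) reduces $-1\in D_K\langle 1,1\rangle$ to the condition $-b\in D_k\langle 1,1\rangle$; this holds only for $b=-1$ (where $-b=1$) and fails for $b=-c$ by the construction of $c$, and for $b=a_j$ with $j\ge 1$ because $-a_j$ is negative at some real place and hence not a sum of two squares. Once this table is assembled, pairwise Witt-inequivalence of all candidate fields is immediate by inspecting the triples.
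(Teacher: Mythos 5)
Your proof is correct and follows essentially the same strategy as the paper: it separates the same kinds of witness fields using the same three invariants (pure transcendence via Theorem \ref{genus zero case}(5), the count of extending orderings via Theorem \ref{genus zero case}(2) and Lemma \ref{order extension}, and the splitting of $(\frac{-1,-1}{k})$ over $k_{a,b}$ via Proposition \ref{Witt1} together with preservation of $-\overline{1}\in D\langle\overline{1},\overline{1}\rangle$). The only differences are cosmetic: you organize the argument as a table of invariant triples and justify the existence of the required elements by Chebotarev and general Brauer-group facts, where the paper constructs them explicitly by weak approximation combined with the Hasse norm theorem and Hilbert reciprocity.
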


\begin{proof} For a prime $\frak{p}$ of $k$ (finite or infinite), denote by $\hat{k}_{\frak{p}}$ the completion of $k$ at $\frak{p}$.

Case 1: $-1 \in D_k\langle 1,1\rangle$. Fix $a,b \in k^*$ so that $(\frac{a,b}{k})$ does not split over $k$. E.g., fix some finite prime $\frak{p}$ of $k$ and choose $a,b \in k^*$ so that $a \notin \hat{k}^{*2}_{\frak{p}}$ and $b \notin D_{\hat{k}_{\frak{p}}}\langle 1, - a\rangle$. Theorem \ref{genus zero case} (5) implies that $k_{a,b} \not\sim k_{1,1}$.

Case 2: $-1 \notin D_k\langle 1,1\rangle$, $k$ not formally real. By hypothesis, $(\frac{-1,-1}{k})$ is not split over $k$. By the Hasse norm theorem there exists a finite prime $\frak{p}$ such that $(\frac{-1,-1}{k})$ is not split over $\hat{k}_{\frak{p}}$. By Hilbert reciprocity there exists a finite prime $\frak{q} \ne \frak{p}$ such that $(\frac{-1,-1}{k})$ is not split over $\hat{k}_{\frak{q}}$. Fix $b\in k^*$ so that $b$ is sufficiently close to $-1$ at $\frak{p}$ and $b$ is sufficiently close to $1$ at $\frak{q}$. Then $b$ is minus a square in $\hat{k}_{\frak{p}}$ and $b$ is a square in $\hat{k}_{\frak{q}}$. If there exists a hyperfield isomorphism $\alpha : Q(k_{-1,-1}) \rightarrow Q(k_{b,-1})$ then, since $(\frac{-1,-1}{k})$ splits over $k_{-1,-1}$ and $\alpha(-1)=-1$, it follows that $(\frac{-1,-1}{k})$ splits over $k_{b,-1}$. According to Proposition \ref{Witt1}, this implies $(\frac{-1,-1}{k}) =1$ or $(\frac{-1,-1}{k}) = (\frac{b,-1}{k})$, i.e., $(\frac{-1,-1}{k}) =1$ or $(\frac{-b,-1}{k}) = 1$. Since  $(\frac{-1,-1}{k})$ does not split over $\hat{k}_{\frak{p}}$ and $(\frac{-b,-1}{k})$ does not split over $\hat{k}_{\frak{q}}$ this is a contradiction. Thus $k_{-1,-1} \not\sim k_{b,-1}$. Since $(\frac{-1,-1}{k}) \ne 1$ and $(\frac{b,-1}{k}) \ne 1$ one also has that $k_{-1,-1} \not\sim k_{1,1}$ and $k_{b,-1} \not\sim k_{1,1}$.

Case 3: Suppose $k$ is formally real. 
For each integer $0 \le i \le r$ choose $a_i \in k^*$ so that $a_i>0$ for exactly $i$ of the orderings of $k$. Without loss of generality, $a_0 = -1$, $a_r=1$. By Lemma \ref{order extension} exactly $i$ of the orderings of $k$ extend to $k_{a_i,-1}$, so $k_{a_i,-1} \not\sim k_{a_j,-1}$ if $i\ne j$ by Theorem \ref{genus zero case} (2). This proves $w \ge r+1$. Fix $0\le i \le r$ so that $r-i$ is odd (e.g., take $i = r-1$). By Hilbert reciprocity there exists a finite prime $\frak{p}$ of $k$ such that $(\frac{a_i,-1}{k})$ does not split over $\hat{k}_{\frak{p}}$. Pick $\overline{a}_i \in k^*$ sufficiently close to $1$ at $\frak{p}$ and sufficiently close to $a_i$ at each ordering of $k$. Then $\overline{a}_i$ is a square in $\hat{k}_{\frak{p}}$ and has the same sign as $a_i$ at each ordering. By Hilbert reciprocity there exists a finite prime $\frak{q} \ne \frak{p}$ so that $(\frac{\overline{a}_{i},-1}{k})$ does not split over $\hat{k}_{\frak{q}}$. Pick $b_r\in k^*$ so that $b_r$ is close to $a_r (=1)$ at each ordering of $k$ and such that $b_r$ is close to $a_i$ at $\frak{p}$. Then $k_{b_r,-1} \not\sim k_{a_r,-1}$, by Theorem \ref{genus zero case} (5) (because $(\frac{a_r,-1}{k})$ splits over $k$ but $(\frac{b_r,-1}{k})$ does not split over $\hat{k}_{\frak{p}}$). Also, all orderings of $k$ extend to $k_{b_r,-1}$, so $k_{b_r,-1} \not\sim k_{a_j,-1}$ for $0\le j <r$, by Theorem \ref{genus zero case} (2). Finally, pick $b_0 \in k^*$ so that $b_0$ is close to $a_0 (=-1)$ at each ordering of $k$, $b_0$ is close to $a_i$ at $\frak{p}$ and $b_0$ is close to $-\overline{a}_i$ at $\frak{q}$. A similar argument to that used in Case 2 shows that $k_{b_0,-1} \not\sim k_{-1,-1}$. Theorem \ref{genus zero case} (2) shows that $k_{b_0,-1} \not\sim k_{a_j,-1}$ for $0<j\le r$ and $k_{b_0,-1} \not\sim k_{b_r,-1}$.
\end{proof}

\noindent
\bf Question 3: \rm For a fixed integer $n\ge 1$, are there infinitely many Witt inequivalent fields $k_{a,b}$, $k$ a number field, $[k:\mathbb{Q}] = n$, $a,b\in k^*$?

\smallskip

Following the argument of \cite[Corollary 8.8]{gm2016} we are able to partially answer this question in case $n=2$: let $d$ be a square free integer and denote by $N$ the number of prime integers that ramify in $\mathbb{Q}(\sqrt{d})$. This is equal to the number of prime divisors of the discriminant of $\mathbb{Q}(\sqrt{d})$. Recall that the discriminant of $\mathbb{Q}(\sqrt{d})$ is $d$ if  $d \equiv 1 \mod 4$ and $4d$ otherwise.   Then the $2$-rank of the class group of $\mathbb{Q}(\sqrt{d})$ is  $$ \begin{cases} N-2 \text{ if } d>0 \text{ and } d \notin D_{\mathbb{Q}}\langle 1,1\rangle,\\ N-1 \text{ otherwise.} \end{cases}$$  
See \cite[Corollary 18.3]{ch} for the proof. In particular, there are infinitely many possible values for the $2$-rank of the class number for fields of the sort $\mathbb{Q}(\sqrt{d})$, $d \in \mathbb{Q}^*\backslash \mathbb{Q}^{*2}$. Combining this with Theorem \ref{genus zero case} yields:

\begin{cor}\label{infinitely many} There are infinitely many Witt inequivalent fields of the form $k(x)$, $k$ a quadratic extension of $\mathbb{Q}$. 
\end{cor}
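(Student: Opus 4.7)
The plan is to use Theorem~\ref{genus zero case}(5) as the key distinguishing invariant. Every field of the form $k(x)$ with $k$ a number field arises as $k_{1,1}$ (the conic $x^2+y^2=1$ is rational via $(1,0)$), so it is purely transcendental over its field of constants $k$ (which equals $k$ itself by Proposition~\ref{trivial}). Consequently, for any two fields $k(x)$ and $\ell(x)$ with $k,\ell$ number fields, part~(5) of the theorem applies to any hyperfield isomorphism between their quadratic hyperfields, and forces the $2$-ranks of the ideal class groups of $k$ and $\ell$ to coincide. It therefore suffices to exhibit an infinite sequence of quadratic number fields whose class groups have pairwise distinct $2$-ranks.

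For this I would invoke the formula quoted just before the corollary: for squarefree $d \in \mathbb{Q}^*\setminus\mathbb{Q}^{*2}$, the $2$-rank of the class group of $\mathbb{Q}(\sqrt{d})$ equals $N-1$ or $N-2$, where $N$ is the number of rational primes ramifying in $\mathbb{Q}(\sqrt{d})$. Since $N$ is at least the number of distinct prime divisors of $d$, it can be made arbitrarily large by letting $d$ be a product of sufficiently many distinct primes. Hence the $2$-rank takes infinitely many distinct values as $d$ varies. Choose squarefree $d_1,d_2,\ldots$ so that the associated $2$-ranks form a strictly increasing sequence, and set $k_i := \mathbb{Q}(\sqrt{d_i})$.

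Finally, suppose $k_i(x) \sim k_j(x)$ with $i \ne j$. Then Theorem~\ref{genus zero case}(5) forces the $2$-ranks of the class groups of $k_i$ and $k_j$ to be equal, contradicting the choice of the $d_i$. Hence $\{k_i(x)\}_{i\ge 1}$ is an infinite family of pairwise Witt inequivalent fields of the required form. There is no serious obstacle in the argument itself; all the substantive content is packaged in Theorem~\ref{genus zero case}(5) together with the classical class-number formula \cite[Corollary 18.3]{ch}, and the corollary amounts to juxtaposing them.
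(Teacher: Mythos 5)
Your argument is correct and is essentially the paper's own: the paper likewise derives the corollary by combining Theorem~\ref{genus zero case}(5) (invariance of the $2$-rank of the class group under Witt equivalence of purely transcendental genus-zero function fields) with the formula from \cite[Corollary 18.3]{ch} showing that this $2$-rank is unbounded over quadratic fields $\mathbb{Q}(\sqrt{d})$. Your write-up merely makes explicit the steps the paper leaves implicit (identifying $k(x)$ with $k_{1,1}$ and selecting the $d_i$), so there is nothing to add.
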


Finally, we consider an application of Corollary \ref{one variable} and Theorem \ref{genus zero case} to fields of the form $\mathbb{Q}_{a,b}$.

\begin{prop} \label{prime matching} Suppose $ \alpha : Q(\mathbb{Q}_{a,b}) \rightarrow Q(\mathbb{Q}_{c,d})$ is a hyperfield isomorphism. Then, 
for each prime integer $p$, $\alpha(p) =\pm q$ for some prime integer $q$, and $p=2$ $\Rightarrow$ $q=2$.
\end{prop}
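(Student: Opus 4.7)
The plan is to identify $\alpha(p)$ modulo squares by tracking, prime by prime, where it has odd valuation, transporting this information across $\alpha$ via Corollary~\ref{one variable}.

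First, applying Theorem~\ref{genus zero case}(1) with $k=\ell=\mathbb{Q}$ (both $K=\mathbb{Q}_{a,b}$ and $L=\mathbb{Q}_{c,d}$ are function fields of genus zero curves over $\mathbb{Q}$), $\alpha$ restricts to a group isomorphism $\mathbb{Q}^*/\mathbb{Q}^{*2}\to\mathbb{Q}^*/\mathbb{Q}^{*2}$, so $\alpha(p)\equiv\pm q_1\cdots q_r\pmod{\mathbb{Q}^{*2}}$ for some distinct primes $q_1,\dots,q_r$; the task reduces to showing $r=1$ and pinpointing $q_1$.

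For each prime $p$, I would use Lemma~\ref{unramified extension} to lift $v_p$ to an Abhyankar valuation $\tilde v_p$ on $K$ with $\tilde v_p(K^*)=v_p(\mathbb{Q}^*)=\mathbb{Z}$. Abhyankar's equality then forces $K_{\tilde v_p}$ to be a function field of transcendence degree one over $\mathbb{F}_p$, placing $\tilde v_p\in\nu_{K,1}$ when $p$ is odd and $\tilde v_p\in\nu_{K,2}$ when $p=2$. Corollary~\ref{one variable} then produces a matching valuation $\tilde w_p$ in the same class $\nu_{L,i}$ with $\alpha(U_{\tilde v_p}K^{*2}/K^{*2})=U_{\tilde w_p}L^{*2}/L^{*2}$, and since $L_{\tilde w_p}$ has the same characteristic as $K_{\tilde v_p}$, the restriction $\tilde w_p|_\mathbb{Q}$ is $v_{p'}$ for a unique prime $p'$, with $p'=2$ iff $p=2$.

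The heart of the argument, and where I expect the main obstacle, is an index computation guaranteeing that parities of valuations are preserved under $\alpha$. Since $\tilde v_p$ is unramified over $v_p$, the subgroup $U_{\tilde v_p}K^{*2}/K^{*2}\cap\mathbb{Q}^*/\mathbb{Q}^{*2}=\{r:v_p(r)\text{ even}\}$ has index $2$ in $\mathbb{Q}^*/\mathbb{Q}^{*2}$. Transporting via $\alpha$, the subgroup $U_{\tilde w_p}L^{*2}/L^{*2}\cap\mathbb{Q}^*/\mathbb{Q}^{*2}$ must also have index $2$; this rules out even ramification of $\tilde w_p$ over $v_{p'}$, since otherwise every class in $\mathbb{Q}^*/\mathbb{Q}^{*2}$ would land in the intersection, giving index $1$. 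Consequently, for each $r\in\mathbb{Q}^*/\mathbb{Q}^{*2}$, $v_p(r)$ is even iff $v_{p'}(\alpha(r))$ is even.

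Running the same construction with $\alpha^{-1}$ shows $p\mapsto p'$ is a bijection on primes sending $2$ to $2$. To conclude, I would test $\alpha(p)$ against every prime $q$: setting $q^\sharp$ to be the prime with $(q^\sharp)'=q$, the equivalence above yields $v_q(\alpha(p))$ odd iff $v_{q^\sharp}(p)$ odd iff $q^\sharp=p$ iff $q=p'$. Thus the prime support of $\alpha(p)$ modulo squares is exactly $\{p'\}$, so $\alpha(p)\equiv\pm p'\pmod{\mathbb{Q}^{*2}}$, with $p=2$ forcing $p'=2$, as required.
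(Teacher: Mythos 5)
Your proposal is correct and follows essentially the same route as the paper: both arguments rest on Theorem \ref{genus zero case}(1), Lemma \ref{unramified extension} and Corollary \ref{one variable}, the only difference being that the paper packages the endgame as a minimality argument for the sets $\mathcal{S}_K(r)=\{v\in\nu_{K,1}\cup\nu_{K,2}: r\notin U_vK^{*2}\}$ under inclusion, rather than your explicit parity-preserving bijection $p\mapsto p'$ on primes. One small slip in wording: $L_{\tilde{w}_p}$ need not have the same characteristic as $K_{\tilde{v}_p}$ (Corollary \ref{one variable} only preserves the class $\nu_{\cdot,i}$, and $\nu_{\cdot,1}$ mixes all odd residue characteristics), but your argument never actually uses this --- it only needs that the residue characteristic is positive and equals $2$ exactly when $i=2$, which is what the rest of your proof correctly relies on.
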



\begin{proof} Let $K = \mathbb{Q}_{a,b}$, $L= \mathbb{Q}_{c,d}$. Theorem \ref{genus zero case} shows that $r\mapsto \alpha(r)$ defines a group automorphism of $\mathbb{Q}^*/\mathbb{Q}^{*2}$. For $r\in \mathbb{Q}^*/\mathbb{Q}^{*2}$, define
$$\mathcal{S}_K(r) := \{ v\in \nu_{K,1}\cup \nu_{K,2} : r \notin U_vK^{*2}\}.$$
Note that $\mathcal{S}_K(\pm 1)$ is the empty set.
Let $p$ be a prime. The set $\mathcal{S}_K(\pm p)$  is non-empty (by Lemma \ref{unramified extension}) and is minimal among all non-empty $\mathcal{S}_K(r)$. It follows that $\mathcal{S}_L(\alpha(p))$ is non-empty and minimal among all non-empty $\mathcal{S}_L(s)$, $s\in \mathbb{Q}^*/\mathbb{Q}^{*2}$, so $\alpha(p) = \pm q$, for some prime $q$. Note also that if $p=2$, then $\mathcal{S}_K(p)$ is a subset of $\nu_{K,2}$. Then $\mathcal{S}_L(\pm q)$ is a subset of $\nu_{L,2}$, so $q=2$.
\end{proof}

Function fields of conics defined over local fields have been investigated by the authors in their earlier work \cite{gm2016a}. There, the following versions of Corollaries \ref{one variable} and \ref{rational points} are established for local fields:

\begin{theorem}[{\cite[Theorem 3.5]{gm2016a}}] \label{local one variable} Suppose $K$, $L$ are function fields in one variable over local fields of characteristic $\ne 2$ which are Witt equivalent via a hyperfield isomorphism $\alpha : Q(K) \rightarrow Q(L)$. Then for each $i\in \{ 0,1,2,3\}$ there is a uniquely defined bijection  between $\mu_{K,i}$ and $\mu_{L,i}$ such that, if $v \leftrightarrow w$ under this bijection, then $\alpha$ maps $(1+M_v)K^{*2}/K^{*2}$ onto $(1+M_w)L^{*2}/L^{*2}$ and $U_vK^{*2}/K^{*2}$ onto $U_wL^{*2}/L^{*2}$ for $i \in \{0,1,2\}$ and such that $\alpha$ maps $(1+M_v)K^{*2}/K^{*2}$ onto $(1+M_w)L^{*2}/L^{*2}$ for $i=3$.
\end{theorem}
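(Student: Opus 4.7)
The plan is to mirror the proof of Corollary \ref{one variable} from the global case, substituting local-field residue information at each step, and using Theorem \ref{main lemma} as the central tool. For each valuation $v\in\mu_{K,i}$ I would first classify which hypothesis of Theorem \ref{main lemma} applies: for the $\Gamma_v=\mathbb{Z}$ classes whose residue field is not of characteristic $2$ (these will be $\mu_{K,0}$ and $\mu_{K,1}$), I expect hypothesis (i) to hold, namely that the basic part of $(1+M_v)K^{*2}$ equals $U_vK^{*2}$ and $(1+M_v)K^{*2}$ is unexceptional; for the $\Gamma_v=\mathbb{Z}$ class with residue field of characteristic $2$ (the $\mu_{K,2}$ slot parallel to $\nu_{K,2}$), hypothesis (ii) should apply, i.e.\ the basic part equals $(1+M_v)K^{*2}$ which has index $2$ in $U_vK^{*2}$; and for the $\Gamma_v=\mathbb{Z}^2$ class with finite residue field of characteristic $\ne 2$, the two cases are separated according as $-1\in K_v^{*2}$ or not, which governs whether $K_v^{*2}$ is exceptional.

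Once the hypothesis type is fixed for each class, I would apply Theorem \ref{main lemma} to produce a valuation $w$ of $L$ with $\alpha((1+M_v)K^{*2}/K^{*2})=(1+M_w)L^{*2}/L^{*2}$, and, whenever (i) applies, also $\alpha(U_vK^{*2}/K^{*2})=U_wL^{*2}/L^{*2}$. Running the same argument through $\alpha^{-1}$ would yield a two-sided matching. To verify that $w$ lands in $\mu_{L,i}$ and not some other $\mu_{L,j}$, I would use the preserved invariants: the index $(L^*:U_wL^{*2})\ge (K^*:U_vK^{*2})$ (and equality by symmetry) controls $\mathrm{rk}_{\mathbb{Q}}\Gamma_w$ via diagram \eqref{d3}, while in cases satisfying (i) the induced hyperfield isomorphism $Q(K_v)\cong Q(L_w)$ from diagram \eqref{d2}, combined with the known classification of Witt equivalence in the local and global one-variable settings (used as in the global proof), pins down the arithmetic type of the residue field. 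Uniqueness of the bijection follows because, within each class, $v$ is determined by the subgroup $(1+M_v)K^{*2}$ of $K^*$.

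The main obstacle will be the $\Gamma_v=\mathbb{Z}^2$ cases, especially $\mu_{K,3}$ where $-1\in K_v^{*2}$. Here Theorem \ref{main lemma} does not directly yield the match for $U_v K^{*2}/K^{*2}$, only for $(1+M_v)K^{*2}/K^{*2}$, because $K_v^{*2}$ is exceptional. The standard dodge is to approach $v$ through its rank-one coarsening $v'$: first run Theorem \ref{main lemma} on $v'$ to produce $w'$ of $L$, then use the functoriality expressed by the three-level commutative diagram (relating $Q(K)$, $K/_m(1+M_{v'})K^{*2}$, $K_{v'}/_m(1+M_{\overline v})K_{v'}^{*2}$, and $Q(K_v)$) to descend $\alpha$ to a hyperfield isomorphism between $K_{v'}/_m(1+M_{\overline v})K_{v'}^{*2}$ and its $L$-counterpart, and finally apply Theorem \ref{main lemma} again to $\overline v$. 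The delicate verification is that the $w$ produced this way has $\Gamma_w=\mathbb{Z}^2$ with finite residue field and the correct behavior of $-1$, rather than accidentally landing in one of $\mu_{L,0},\mu_{L,1},\mu_{L,2}$; this is exactly the $\nu_{K,4}$-versus-$\nu_{K,3}$ dichotomy from the global proof, and the same book-keeping on exceptionality should carry over to the local setting.
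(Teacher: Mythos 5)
First, note that the paper does not actually prove Theorem \ref{local one variable}: the statement is imported verbatim from \cite[Theorem 3.5]{gm2016a}, so there is no in-text argument to compare yours against, and your proposal has to be judged on its own. Judged that way, it has a genuine gap at its very first step. You claim that for $v\in\mu_{K,0}$, i.e.\ for $\Gamma_v=\mathbb{Z}$ and $K_v$ a local field, hypothesis (i) of Theorem \ref{main lemma} holds, so that the basic part of $(1+M_v)K^{*2}$ is all of $U_vK^{*2}$ and this group is unexceptional. This is false, and it is precisely the point at which the local situation diverges from the global one. If $K_v$ is a $p$-adic field with uniformizer $\pi$, then $D_{K_v}\langle 1,\pi\rangle$ is the norm group of $K_v(\sqrt{-\pi})$, which is exactly $K_v^{*2}\cup\pi K_v^{*2}$; hence the square class of $\pi$ (and likewise of $-\pi$) is rigid, so $B(K_v^{*2})\subseteq U_{v_0}K_v^{*2}\subsetneq K_v^*$, where $v_0$ denotes the canonical valuation of $K_v$. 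Pulling back along $\iota$, the basic part of $T=(1+M_v)K^{*2}$ is a \emph{proper} subgroup of $U_vK^{*2}$ (for $p\equiv 1\bmod 4$ it even equals $T$ itself, which is then exceptional and of index $4$, not $2$, in $U_vK^{*2}$). So neither hypothesis (i) nor (ii) of Theorem \ref{main lemma} applies to any $v\in\mu_{K,0}$, and the theorem cannot be invoked there as you propose. In the global setting of Corollary \ref{one variable} this problem does not arise, because the residue fields occurring in $\nu_{K,0}\cup\nu_{K,1}$ are global and have no rigid square classes.

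The same obstruction undermines your treatment of $\mu_{K,3}$: for a rank-two composite valuation lying over a place of $K$ trivial on $k$, the rank-one coarsening $v'$ is itself an element of $\mu_{K,0}$, so ``first run Theorem \ref{main lemma} on $v'$'' fails for the same reason. What actually has to be done (and what \cite{gm2016a} does) is essentially the reverse: one first matches the rank-two valuations, whose residue fields are finite and where the exceptional/unexceptional dichotomy is governed by whether $-1\in K_v^{*2}$ exactly as in the global $\nu_{K,3}$/$\nu_{K,4}$ split, and only afterwards recovers the valuations of $\mu_{K,0}$, together with the images of $U_vK^{*2}/K^{*2}$, from the rank-two data by means of the coarsening diagrams. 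Your sketch correctly anticipates the exceptionality bookkeeping for finite residue fields, but by locating the difficulty at $\mu_{K,3}$ rather than at $\mu_{K,0}$ it assumes away the part of the argument that is genuinely new in the local case.
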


\begin{theorem}[{\cite[Theorem 3.6]{gm2016a}}] \label{local rational points} Let $K \sim L$ be function fields in one variable over local fields $k$ and $\ell$ respectively,  with fields of constants $k$ and $\ell$ respectively. Then $k\sim \ell$ except possibly when $k,\ell$ are both dyadic local fields.  In the latter case if there exists $v \in \mu_{K,0}$ with $K_v =k$ and $w \in \mu_{L,0}$ with $L_w= \ell$ then $k \sim \ell$.
\end{theorem}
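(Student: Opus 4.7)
The plan is to mimic the proof of Corollary \ref{rational points}, with Theorem \ref{local one variable} playing the role of Corollary \ref{one variable}. Using the bijection $v \leftrightarrow w$ between $\mu_{K,i}$ and $\mu_{L,i}$ produced by the hyperfield isomorphism $\alpha$, I would transfer information about valuations on $K$ whose residue field equals or approximates $k$ to corresponding valuations on $L$ with residue field related to $\ell$. The intrinsic description of $k$ inside $K$ is the following: since $k$ is the field of constants of $K$ and $\operatorname{trdeg}(K:k)=1$, any valuation $v$ on $K$ that is trivial on $k$ and whose residue field is a finite algebraic extension of $k$ is Abhyankar with $\Gamma_v = \mathbb{Z}$ and $K_v$ a local field.

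For the non-dyadic case, any such $v$ lies in $\mu_{K,0}$ (the well-behaved class, where $\alpha$ preserves both $(1+M_v)K^{*2}$ and $U_v K^{*2}$), so the bijection produces $w \in \mu_{L,0}$, and by diagram (\ref{d2}) an induced hyperfield isomorphism $Q(K_v) \cong Q(L_w)$; in particular $K_v \sim L_w$. Choosing $v$ so as to minimise $[K_v:k]$ (equivalently, the absolute local degree $[K_v:\mathbb{Q}_p]$ or its equicharacteristic analogue, which is Witt-invariant for non-dyadic local fields), and running the degree-matching argument from the proof of Theorem \ref{genus zero case} (4), I would conclude $[K_v:k] = [L_w:\ell]$; the joint minimality then forces both residue extensions to collapse, giving $K_v = k$ and $L_w = \ell$, whence $k \sim \ell$ directly from $Q(K_v) \cong Q(L_w)$.

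In the dyadic case, the obstruction is that valuations with $K_v = k$ may fall into the exceptional class $\mu_{K,3}$ of Theorem \ref{local one variable}, where only the $(1+M_v)K^{*2}$ part is matched under $\alpha$ and not the units, so no hyperfield isomorphism on the residue hyperfields is automatically induced. The explicit existence hypothesis, namely that $v \in \mu_{K,0}$ with $K_v = k$ and $w \in \mu_{L,0}$ with $L_w = \ell$ exist, places the constant-residue valuations in the well-behaved class on both sides. The bijection then matches $v$ to some $w' \in \mu_{L,0}$, yielding $Q(k) \cong Q(L_{w'})$, and symmetrically $w$ to some $v' \in \mu_{K,0}$ with $Q(K_{v'}) \cong Q(\ell)$. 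Uniqueness of the bijection together with the minimality $[K_v:k] = 1 = [L_w:\ell]$ identifies $w' = w$ and $v' = v$, so $Q(k) \cong Q(\ell)$, i.e.\ $k \sim \ell$.

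The main obstacle will be the degree-matching/minimality step that takes $K_v = k$ across to $L_w = \ell$. One must establish $[K_v:k] = [L_w:\ell]$ by comparing the absolute local degrees (which are invariant under Witt equivalence for non-dyadic local fields) and rule out any ``doubling'' analogous to the case $2[k:\mathbb{Q}] = [\ell:\mathbb{Q}]$ excluded in Theorem \ref{genus zero case} (4). This is typically accomplished by a commutative-diagram argument showing that the vertical map $k^*/k^{*2} \to K_v^*/K_v^{*2}$ cannot have non-trivial kernel on one side while being an isomorphism on the other. The dyadic case requires the extra hypothesis precisely because the exceptional class $\mu_{K,3}$ destroys the availability of the induced residue-hyperfield isomorphism needed to run this comparison.
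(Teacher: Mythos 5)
A preliminary caveat: the paper contains no proof of Theorem \ref{local rational points} --- it is quoted from \cite[Theorem 3.6]{gm2016a}, which is listed as an unpublished preprint --- so there is no in-paper argument to compare yours against, and your proposal has to be judged on its own terms. Your general framework (pass to residue fields of divisorial valuations via the bijection of Theorem \ref{local one variable} and the induced isomorphisms $Q(K_v)\rightarrow Q(L_w)$ from diagram (\ref{d2})) is certainly the right one, and your dyadic-case argument is essentially sound: the hypothesis supplies degree-one closed points on both sides, and since $[F:\hat{\mathbb{Q}}_2]$ is recoverable from $|F^*/F^{*2}|=2^{[F:\hat{\mathbb{Q}}_2]+2}$ for a dyadic local field $F$, the valuation $w'$ matched to $v$ satisfies $[L_{w'}:\hat{\mathbb{Q}}_2]=[k:\hat{\mathbb{Q}}_2]=[\ell:\hat{\mathbb{Q}}_2]$, forcing $L_{w'}=\ell$. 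Your diagnosis of \emph{why} the dyadic case needs the extra hypothesis is off, however: valuations with dyadic local residue field lie in the well-behaved class $\mu_{K,0}$ (the paper itself writes ``Fix $v\in\mu_{K,0}$ with $K_v=k$'' for dyadic $k$ in its final proof), and the exceptional class $\mu_{K,3}$ consists, by analogy with $\nu_{K,4}$, of rank-two valuations with finite residue field. The real issue is that a closed point of degree one need not exist on a general curve.

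The genuine gap is in your non-dyadic argument. The parenthetical claim that the absolute local degree ``is Witt-invariant for non-dyadic local fields'' is false, and you have the dichotomy exactly backwards: for a nonarchimedean non-dyadic local field $F$ with residue field $\mathbb{F}_q$ one has $W(F)\cong W(\mathbb{F}_q)[\mathbb{Z}/2\mathbb{Z}]$, which depends only on $q\bmod 4$ (equivalently, on the level of $F$), so for instance $\hat{\mathbb{Q}}_5$ is Witt equivalent to its unramified quadratic extension; it is precisely the dyadic fields whose absolute degree is a Witt invariant. Moreover, even granting degree-matching, minimality of $[K_v:k]$ does not force $K_v=k$: the theorem concerns arbitrary function fields in one variable, and such a curve need not have a degree-one closed point --- which is exactly why the dyadic case carries an existence hypothesis that the non-dyadic case, per the statement, must do without. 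So your minimization engine stalls in the very case where the conclusion is claimed unconditionally. Fortunately the non-dyadic case needs much less: after separating $\mathbb{C}$, $\mathbb{R}$ and the nonarchimedean non-dyadic fields by coarse invariants (which $\mu_{K,i}$ are nonempty, $|K^*/K^{*2}|$, existence of orderings), the only remaining datum is the level of $k$, and this transfers directly: since $k$ is the field of constants, $-1\in k^{*2}$ iff $-1\in K^{*2}$ iff $-\overline{1}=\overline{1}$ in $Q(K)$, a condition visibly preserved by $\alpha$. You should replace the degree-minimization step in the non-dyadic case by an argument of this softer kind.
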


In view of the abovementioned results, we are able to slightly extend the results of Proposition \ref{prime matching} to the local case:

\begin{theorem} Suppose $k, \ell$ are local fields of characteristic $\ne 2$, $a,b \in k^*$, $c,d \in \ell^*$. Then $k_{a,b} \sim \ell_{c,d}$ $\Rightarrow$ $k \sim \ell$.
\end{theorem}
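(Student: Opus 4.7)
Let $K := k_{a,b}$ and $L := \ell_{c,d}$, and let $\alpha : Q(K) \to Q(L)$ denote the hyperfield isomorphism realizing $K \sim L$. By Proposition \ref{trivial} the fields of constants of $K/k$ and $L/\ell$ are exactly $k$ and $\ell$, so Theorem \ref{local rational points} applies. If at least one of $k,\ell$ is non-dyadic, it yields $k\sim\ell$ at once, so assume both are dyadic. Then, by the second clause of that theorem, it suffices to produce $v\in\mu_{K,0}$ with $K_v=k$ and $w\in\mu_{L,0}$ with $L_w=\ell$. Such a $v$ exists on $K$ exactly when $\bigl(\frac{a,b}{k}\bigr)=1$: in the split case Proposition \ref{prop_6}(5) gives $K\cong_k k(t)$ and any $k$-point of $\mathbb{P}^1$ furnishes $v$, whereas in the non-split case every closed point of the projective conic has residue degree at least $2$ over $k$. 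The analogous dichotomy holds for $L$.

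The heart of the proof is to show $\bigl(\frac{a,b}{k}\bigr)=1$ iff $\bigl(\frac{c,d}{\ell}\bigr)=1$. Following the opening of the proof of Theorem \ref{genus zero case}, every $r\in k^*/k^{*2}$ is a unit at each $v\in\mu_{K,0}$, so Theorem \ref{local one variable} (bijection $\mu_{K,0}\leftrightarrow\mu_{L,0}$ with compatibility of unit subgroups) implies $\alpha(r)$ lies in $U_w L^{*2}/L^{*2}$ for every $w\in\mu_{L,0}$; the PID structure of $\ell[x,y]/(cx^2+dy^2-1)$ whose unit group is $\ell^*$ (first proposition of Section 3), together with the vanishing of $\mathrm{Pic}^0$ on a smooth genus-zero curve, then forces $\alpha(r)\in\ell^*/\ell^{*2}$. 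Hence $\alpha$ restricts to a group isomorphism $k^*/k^{*2}\to\ell^*/\ell^{*2}$. For each matched pair $v\leftrightarrow w$ one then has the commutative square with horizontal hyperfield isomorphisms and injective verticals $k^*/k^{*2}\hookrightarrow K_v^*/K_v^{*2}$, $\ell^*/\ell^{*2}\hookrightarrow L_w^*/L_w^{*2}$ induced by $k\hookrightarrow K_v$, $\ell\hookrightarrow L_w$. Mimicking the argument for Theorem \ref{genus zero case}(4), the minima of $[K_v:\mathbb{Q}_2]$ over $v\in\mu_{K,0}$ and $[L_w:\mathbb{Q}_2]$ over $w\in\mu_{L,0}$ coincide, and equal $[k:\mathbb{Q}_2]$ or $2[k:\mathbb{Q}_2]$ according as $\bigl(\frac{a,b}{k}\bigr)$ splits or not; an asymmetric splitting would make one vertical an isomorphism while the other has non-trivial cokernel, contradicting commutativity of the square.

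This closes the split/split subcase directly via Theorem \ref{local rational points}. The main obstacle is the non-split/non-split subcase, where no $v\in\mu_{K,0}$ with $K_v=k$ exists, so Theorem \ref{local rational points} is not directly applicable. My plan here is to exploit the uniqueness of the non-split quaternion algebra over a local field (so $K$ is $k$-isomorphic to a canonical field depending only on $k$, and likewise for $L$), pass to the quadratic extension $k':=k(\sqrt{-ab})$ splitting $\bigl(\frac{a,b}{k}\bigr)$ which realizes $K$ as an index-$2$ subfield of $k'(t)$, and descend the resulting Witt equivalence from $k'$ to $k$ using the group-extension machinery of Section 2 applied at a $\mu_{K,0}$-valuation of minimal residue degree $2$. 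Controlling this descent in the dyadic setting, where the square-class structure of $k$ is considerably richer than in the non-dyadic case and Witt equivalence of quadratic extensions need not descend to the base, is the most delicate point of the argument.
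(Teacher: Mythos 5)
Your reduction to the dyadic case via Theorem \ref{local rational points}, and your derivation of the isomorphism $k^*/k^{*2}\to\ell^*/\ell^{*2}$ (hence $[k:\hat{\mathbb{Q}}_2]=[\ell:\hat{\mathbb{Q}}_2]$) by the method of Theorem \ref{genus zero case}, both match the paper. But the proof is not complete: the non-split/non-split subcase, which you correctly identify as the one where no $v\in\mu_{K,0}$ has $K_v=k$ and where Theorem \ref{local rational points} therefore gives nothing, is left as a ``plan'' rather than an argument. The proposed descent --- pass to $k'=k(\sqrt{-ab})$, realize $K$ inside $k'(t)$, and push a Witt equivalence back down from $k'$ to $k$ --- is exactly the step you concede is delicate in the dyadic setting, and nothing in Section 2's group-extension machinery is shown to accomplish it; as you yourself note, Witt equivalence of quadratic extensions does not in general descend. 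So there is a genuine gap, and it sits precisely at the case your strategy was designed around.

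The paper closes this case by a much shorter route that never needs to decide whether the conics have rational points. Since $k,\ell$ are dyadic local fields of the same degree over $\hat{\mathbb{Q}}_2$, they are Witt equivalent iff they have the same level, and the level of a dyadic local field is $1$, $2$ or $4$. Level $4$ is equivalent to odd degree, so it transfers by the degree equality you already have. Level $1$ transfers because $-\overline{1}=\overline{1}$ in $Q(K)$ is preserved by $\alpha$, and $-1\in K^{*2}$ iff $-1\in k^{*2}$ since $k$ is the field of constants of $K$ (Proposition \ref{trivial}); the remaining possibility, level $2$, then follows by elimination. If you replace your final paragraph with this level argument, your proof goes through; alternatively you could keep your split-iff-split analysis and invoke Theorem \ref{local rational points} in the split case, but you would still need the level argument (or something equivalent) for the non-split case.
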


\begin{proof} By Theorem \ref{local rational points} it suffices to deal with the case where $k,\ell$ are both dyadic. Let $\alpha : Q(k_{a,b}) \rightarrow Q(\ell_{c,d})$ be some hyperfield isomorphism. Making use of the bijection between $\mu_{K,0}$ and $\mu_{L,0}$ induced by $\alpha$, and arguing as in the proof of Theorem \ref{genus zero case} we see that $\alpha$ induces an isomorphism between $k^*/k^{*2}$ and $\ell^*/\ell^{*2}$. This implies $[k:\hat{\mathbb{Q}}_2] = [\ell:\hat{\mathbb{Q}}_2]$. Thus $k \sim \ell$ iff $k,\ell$ have the same level. The level of a dyadic local field is $1,2$ or $4$. If $k$ has level $4$ then $[k:\hat{\mathbb{Q}}_2] = [\ell:\hat{\mathbb{Q}}_2]$ is odd so $\ell$ has level $4$. If $k$ has level $1$ then $k_{a,b}$ and consequently also $\ell_{c,d}$ has level $1$. Since $\ell$ is algebraically closed in $\ell_{c,d}$ this implies $\ell$ has level $1$.
\end{proof}

\begin{theorem} Suppose $k$ is a local field of characteristic $\ne 2$, $a,b,c,d \in k^*$. Then $k_{a,b} \sim k_{c,d}$ $\Rightarrow$ $(\frac{a,b}{k}) = (\frac{c,d}{k})$ except possibly in the case when $k$ is $p$-adic of level $1$, for some odd prime $p$.
\end{theorem}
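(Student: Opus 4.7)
The plan is to combine Proposition \ref{Witt1} with the bijection on Abhyankar valuations from Theorem \ref{local one variable} to transfer splitting of quaternion algebras from $k_{a,b}$ to $k_{c,d}$ through a hyperfield isomorphism $\alpha : Q(k_{a,b}) \to Q(k_{c,d})$. Setting $K = k_{a,b}$ and $L = k_{c,d}$, the first step is to show that $\alpha$ restricts to a group isomorphism $\beta : k^*/k^{*2} \to k^*/k^{*2}$, arguing as in the proof of the preceding theorem, which in turn adapts the proof of Theorem \ref{genus zero case}~(1): using the bijection $\mu_{K,0} \leftrightarrow \mu_{L,0}$ supplied by Theorem \ref{local one variable}, together with the fact (from the beginning of Section~3) that $k[x,y]/(ax^2 + by^2 - 1)$ is a PID with unit group $k^*$, one identifies $k^*/k^{*2}$ inside each of $K^*/K^{*2}$ and $L^*/L^{*2}$ as the square classes which are units at every valuation in $\mu_{K,0}$ (respectively $\mu_{L,0}$).

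Once $\beta$ is in hand, the defining equation $1 \in D_K\langle a,b\rangle$ together with $\alpha$ being a hyperfield isomorphism yields $1 \in D_L\langle \beta(a), \beta(b)\rangle$, so $(\frac{\beta(a),\beta(b)}{k})$ splits over $L$, and Proposition \ref{Witt1} forces
\[
\left(\frac{\beta(a), \beta(b)}{k}\right) \in \left\{1, \left(\frac{c, d}{k}\right)\right\}.
\]
The symmetric argument applied to $\alpha^{-1}$ yields $(\frac{\beta^{-1}(c), \beta^{-1}(d)}{k}) \in \{1, (\frac{a,b}{k})\}$.

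To decide which alternative actually occurs, I would run a cardinality argument. Let $S_F := \{(r,s) \in (k^*/k^{*2})^2 : (\frac{r,s}{k}) \text{ splits over } F\}$; by Proposition \ref{Witt1}, $S_K = \{(r,s) : (\frac{r,s}{k}) \in \{1, (\frac{a,b}{k})\}\}$, and the componentwise map $(r,s) \mapsto (\beta(r), \beta(s))$ carries $S_K$ bijectively onto $S_L$. Using non-degeneracy of the Hilbert symbol on $k^*/k^{*2}$ for local $k \neq \mathbb{C}$, one gets $|S_K| = |k^*/k^{*2}|(|k^*/k^{*2}| + 1)/2$ when $(\frac{a,b}{k}) = 1$ and $|S_K| = |k^*/k^{*2}|^2$ when $(\frac{a,b}{k}) \neq 1$. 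These two values differ, so $(\frac{a,b}{k}) = 1 \Leftrightarrow (\frac{c,d}{k}) = 1$. When both are trivial we are done; when both are non-trivial I would invoke the fact that $\operatorname{Br}(k)[2] = \mathbb{Z}/2\mathbb{Z}$ for every local $k$ of characteristic $\neq 2$ different from $\mathbb{C}$, which gives a unique non-split quaternion class and hence $(\frac{a,b}{k}) = (\frac{c,d}{k})$.

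The hardest step is the first one, constructing $\beta$. When $k$ is $p$-adic of level $1$ with odd $p$, one has $-1 \in k^{*2}$: this trivializes $(-1,-1)_k$ and collapses several square-class invariants that the proofs of Theorem \ref{genus zero case}~(1) and Theorem \ref{local one variable} rely on, so the canonical identification of $k^*/k^{*2}$ inside $K^*/K^{*2}$ through the valuations in $\mu_{K,0}$ cannot be pushed through $\alpha$ by the same reasoning. This is precisely the case excluded in the statement.
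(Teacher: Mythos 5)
Your steps 2 and 3 are sound and actually quite elegant: granted a group isomorphism $\beta$ of $k^*/k^{*2}$ induced by $\alpha$, Proposition \ref{Witt1} together with the count of split Hilbert-symbol pairs ($q(q+1)/2$ versus $q^2$ with $q=|k^*/k^{*2}|>1$) does force $(\frac{a,b}{k})=(\frac{c,d}{k})$. The genuine gap is in step 1, and it is not a technicality. Transporting the identification of $k^*K^{*2}/K^{*2}$ with $\bigcap_v U_vK^{*2}/K^{*2}$ through $\alpha$ requires that the divisorial valuations of $K$ trivial on $k$ belong to a class for which Theorem \ref{local one variable} preserves unit groups. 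When $k$ is non-dyadic $p$-adic, the residue fields $K_v$ of these valuations are non-dyadic local fields, and for such residue fields the valuation-detection machinery behind Theorem \ref{local one variable} (ultimately Theorem \ref{main lemma}) fails: in a non-dyadic local field $F$ every square class outside $\pm F^{*2}$ is rigid, so the basic part of $(1+M_v)K^{*2}$ has index $2$ or $4$ in $U_vK^{*2}$ and satisfies neither hypothesis (i) nor (ii) of Theorem \ref{main lemma}. Consistently with this, the paper invokes $\mu_{K,0}$ and the ``argue as in Theorem \ref{genus zero case}'' construction of $\beta$ only when $k$ is dyadic; for non-dyadic $k$ of level $2$ its proof abandons $\mu_{K,0}$ entirely and instead uses the valuations in $\mu_{K,1}$ (Gauss-type extensions of $v_0$, with residue fields such as $\mathbb{F}_q(x)$) together with the identity $\pi=f^2+g^2$ in $k_{\pi,-1}$ to detect the level of those residue fields. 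So your argument covers the dyadic case, where it is a legitimate and arguably cleaner alternative to the paper's degree-counting argument, but it leaves the non-dyadic level-$2$ case --- which the theorem does assert --- unproven, and it does not address $k=\mathbb{R}$ (the paper disposes of that case directly: $\mathbb{R}(x)$ is formally real while the function field of the non-split conic has level $2$).

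Relatedly, your diagnosis of the level-$1$ exclusion is off the mark: rigidity obstructs the construction of $\beta$ for non-dyadic $k$ of level $2$ exactly as much as for level $1$. The asymmetry is that for level $2$ the residue fields at the valuations of $\mu_{K,1}$ still remember the level of $k$ (via whether $-1$ is a square in $K_v$), whereas for level $1$ this invariant is vacuous and no substitute for $\beta$ is available. Note finally that if your step 1 did go through for all non-archimedean $k$, your cardinality argument would eliminate the level-$1$ exception altogether ($10\ne 16$ when $q=4$), a strengthening the authors explicitly leave open; that alone should have been a warning that the construction of $\beta$ cannot be as routine as claimed.
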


\begin{proof} If $k=\mathbb{C}$ there is only one quaternion algebra and the result is obvious. Otherwise, there are two quaternion algebras, one split and one non-split. Suppose $K = k_{a,b}$, $L = k_{c,d}$, $(\frac{a,b}{k})$ split, $(\frac{c,d}{k})$ non-split. Suppose $k = \mathbb{R}$. Then $K$ is formally real and $L$ is non-real (of level $2$), so $K \not\sim L$. Suppose now that $k$ is dyadic. Suppose $K\sim L$. Fix a hyperfield isomorphism $\alpha : Q(K) \rightarrow Q(L)$. Fix $v \in \mu_{K,0}$ with $K_v=k$ and let $w$ be the corresponding element of $\mu_{L,0}$. Then $k = K_v \sim L_v$ so $[k:\hat{\mathbb{Q}}_2] = [K_v:\hat{\mathbb{Q}}_2] = [L_w:\hat{\mathbb{Q}}_2]$. Since $k \subseteq L_w$, this forces $L_w = k$, i.e., $(\frac{c,d}{k})$ splits, a contradiction. Suppose now that $k$ is $p$-adic, $p \ne 2$, and $k$ has level $2$. Since $k$ has level $2$, we may assume $c=\pi$, where $v_0(\pi) = 1$, and $d=-1$. Thus $\exists$ $f,g \in L^*$ such that $\pi = f^2+g^2$. For each $w \in \mu_{L,1}$, $w(\pi) = v_0(\pi)=1$ is odd, so $w(f^2) = w(g^2) <w(f^2+g^2)$, i.e., $-1$ is a square in $L_w$, for all $w \in \mu_{L,1}$.  Suppose $K \sim L$. Fix a hyperfield isomorphism $\alpha : Q(K) \rightarrow Q(L)$. Then the induced one-to-one correspondence $v \leftrightarrow w$ between $\mu_{K,1}$ and $\mu_{L,1}$ and the induced hyperfield isomorphisms $Q(K_v) \rightarrow Q(L_w)$ imply $-1$ is a square in $K_v$ for all $v \in \mu_{K,1}$. Define one particular such $v$ as follows: Since $(\frac{a,b}{k})$ splits, $K = k(x)$. Extend $v_0$ to $K$ by defining $v(\sum_{i=0}^n a_ix^i) = \min\{ v_0(a_i) : i\in \{ 0,\dots,n\}\}$. Clearly $v\in \mu_{K,1}$ and $K_v = k_{v_0}(x)$. But then $K_v$ and $k_{v_0}$ both have the same level, contradicting the fact that $K_v$ has level $1$ and $k_{v_0}$ has level $2$.
\end{proof}

\end{document}